\documentclass[11pt,a4paper,reqno, final]{article}

\usepackage{template}
\usepackage{bbm, fullpage}
\usepackage[notcite,notref]{showkeys}
\usepackage{titlesec}
\titleformat{\subsection}[runin]
        {\normalfont\bfseries}
        {\thesubsection}
        {0.5em}
        {}
        []

\renewcommand{\P}{\mathbb{P}}
\newcommand{\E}{\mathbb{E}}
\renewcommand{\d}{{\rm d}}
\newcommand{\pnorm}[2]{\left\|#2\right\|_{#1}}

\newcommand{\Z}{\mathbb{Z}}
\newcommand{\R}{\mathbb{R}}
\newcommand{\N}{\mathbb{N}}

\newcommand{\bb}[1]{\mathbb{#1}}
\newcommand{\ignore}[1]{}

\def\sG{\mathcal{G}}

\def\sT{\mathcal{T}}


\begin{document}

\title{
Marked random graphs with given degree sequence:\\ large deviations on the local topology and applications
}

\date{\today}
\author{Rangel Baldasso
  \thanks{E-mail: \ rangel@puc-rio.br; \ Department of Mathematics, PUC-Rio, Rua Marqu\^{e}s de S\~{a}o Vicente 225, G\'{a}vea, 22451-900 Rio de Janeiro, RJ - Brazil.}
  \and
  Alan Pereira
  \thanks{E-mail: \ alan.pereira@im.ufal.br; \ Instituto de Matem\'{a}tica, Universidade Federal de Alagoas, Rua Lorival de Melo Mota s/n, 57072970 Macei\'{o}, AL - Brazil.}
  \and
  Guilherme Reis
  \thanks{E-mail: \ ghpreis@id.uff.br; \ Departamento de Matem\'{a}tica, Universidade Federal Fluminense, Rua Professor Marcos Waldemar de Freitas Reis, s/n, 24210-201 Niterói, RJ - Brazil.}}
\maketitle

\begin{abstract}
We investigate the behavior of the empirical neighborhood distribution of marked graphs in the framework of local weak convergence. Here we extend known results by considering uniform random graphs with given degree sequences and i.i.d.\ marks on half-edges and vertices. We establish a large deviation principle for such families of empirical measures. The proof builds on Bordenave and Caputo's seminal 2015 paper, and Delgosha and Anantharam's 2019 introduction of BC entropy, relying on combinatorial lemmas that allow one to construct suitable approximations of measures supported on marked trees. Possible applications of these results are in the study of interacting diffusions on top of random graphs. 

\noindent
\emph{Keywords and phrases:} large deviations, local topology, sparse random graphs. \\
MSC 2010: 60F10, 05C80.
\end{abstract}

\section{Introduction}

\subsection{Motivation.}

In probability theory, the depiction of interactions among particles or individuals often relies on the framework of random graphs. Typically, models are initially examined in the mean-field scenario, where every two particles interact in the same way. Our objective is to transcend mean-field models, extending our exploration to encompass graphs with bounded degrees.

For each \(n\in \bb N\), consider a vector \(\vec{\ell}_n=(\ell_{n,1},\dots,\ell_{n,n})\) with non-negative integer entries such that $\sum_{i=1}^n\ell_{n,i}$ is even, \(\sup_{n,i}\ell_{n,i}<\infty\), and $\frac{1}{n}\sum_{i=1}^{n} \delta_{\ell_{n,i}}$ converges weakly. We define \(\mathcal{G}(\vec{\ell}_n)\) as the set of graphs \(G\) with a vertex set \( [n]:=\{1, 2, \dots, n\}\) such that the degree of vertex \(i\) in \(G\) is \(\ell_{n,i}\) for all \(i \in [n]\). Let \(G_n\) be the uniform random graph sampled from \(\mathcal{G}(\vec{\ell}_n)\). Now, consider the marked graph \(\bar G_n\), derived from \(G_n\) by assigning independent random variables to both vertices and edges of \(G_n\).

Given a marked graph $\bar{G}_{n}$, define its empirical neighborhood distribution as \begin{equation}\label{eq:empirical_neighborhood_distribution}
U(\bar{G}_n)=\frac{1}{n}\sum_{v=1}^{n}\delta_{[\bar{G}_n(v),v]}, 
\end{equation}
where $[\bar{G}_n(v),v]$ denotes the connected component of $v$ in the graph $\bar{G}_{n}$, rooted at $v$, along with the associated fields of marks within these subgraphs.

\begin{theorem}\label{t:main_intro}
    Consider the sequence of marked graphs $\bar G_n$ as above and assume that the space of marks is finite. The sequence of empirical neighborhood measures $U(\bar{G}_n)$ satisfies a large deviation principle with speed $n$ and rate function presented in Theorem~\ref{theorem:main}.
\end{theorem}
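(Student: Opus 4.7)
The plan is to prove matching upper and lower large deviation bounds for $U(\bar G_n)$ by decoupling the contribution of the underlying uniform graph $G_n$ from that of the i.i.d.\ marks, and reassembling the full rate function as the BC-entropy of the graph marginal plus a conditional relative-entropy term for the marks. Throughout, the natural ambient space is the set of probability measures on rooted connected marked graphs under the local weak topology; this is Polish, and the map $\bar G \mapsto U(\bar G)$ lands in the closed subset of unimodular measures whose degree marginal matches the weak limit of $\frac{1}{n}\sum_{i=1}^{n}\delta_{\ell_{n,i}}$. Exponential tightness is immediate from the uniform bound $\sup_{n,i}\ell_{n,i}<\infty$, since it forces all measures under consideration to concentrate on a compact subset of local neighborhoods of any fixed radius.

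For the upper bound, I would condition on $G_n$ and exploit the product structure of the marks. At the level of rooted neighborhoods of a fixed finite radius $R$, a Sanov-type estimate gives that the conditional probability that $U(\bar G_n)$ lies in a neighborhood of $\mu$ is bounded by $\exp\bigl(-n H(\mu \,\|\, \pi_{G}\mu \otimes \nu^{\mathrm{mark}})\bigr)$ up to subexponential factors, where $\pi_G\mu$ is the unmarked projection and $\nu^{\mathrm{mark}}$ is the product law of the marks. Averaging over $G_n$ and combining with the Bordenave--Caputo--Delgosha--Anantharam upper bound for $U(G_n)$ near $\pi_G\mu$ yields the desired bound; a standard $R\to\infty$ projective limit argument then upgrades the finite-radius estimate to the local weak topology.

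For the lower bound, I would first reduce to target measures $\mu$ supported on marked trees of bounded depth with finite mark type, using the continuity of the BC-entropy along such approximations; this is precisely where the combinatorial approximation lemmas advertised in the abstract are invoked. For such a $\mu$, I would bound the neighborhood probability from below by first constructing explicit graphs in $\mathcal{G}(\vec\ell_n)$ whose rooted-neighborhood profile matches $\pi_G\mu$ up to $o(n)$ (a configuration-model type construction, adapted from Bordenave--Caputo), then enumerating mark assignments along the vertices and edges of the constructed skeleton. A multinomial count over marks, combined with the BC-entropy count of compatible skeletons, recovers both terms of the rate function and matches the upper bound.

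The main obstacle will be the combinatorial approximation itself: given a generic unimodular $\mu$ on marked trees compatible with the degree sequence, one must exhibit a sequence of labeled marked graphs in $\mathcal{G}(\vec\ell_n)$ whose empirical neighborhood measures converge to $\mu$ and whose cardinality attains the conjectured entropy. In the unmarked setting this is the technical heart of Bordenave--Caputo, and its extension to the marked setting is delicate because the marks interact with the local geometry through the joint law encoded in $\mu$: one must distribute marks along edges and vertices of the constructed skeleton in a manner consistent with \emph{both} the optimal skeleton count \emph{and} the correct conditional distribution of marks given local structure, while respecting the frozen degree sequence. Carrying out this joint approximation, and verifying that it is stable under the radius truncation used in the upper bound, will be the principal technical challenge.
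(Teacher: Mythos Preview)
Your decomposition is genuinely different from the paper's, and it has a real gap in the Sanov step.

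The paper does \emph{not} decouple the unmarked graph from the marks. It conditions instead on the empirical mark-count vectors $(\vec m_n,\vec u_n)$: given these, the marked graph is uniform on $\bar{\mathcal G}(\vec\ell_n)_{\vec m_n,\vec u_n}$, and an LDP for this conditioned model is established directly via the \emph{marked} BC entropy of Delgosha--Anantharam (Theorem~\ref{ldp:uniform_given_neihborhood}, whose hard part is Lemma~\ref{lemma:modification}). The i.i.d.\ marks then enter only through Sanov's theorem for the count vectors $(L_{m_n}(\chi^2_\leq),L_n(\vartheta))$, and the two pieces are assembled by Biggins' mixture theorem. The rate function~\eqref{eq:ratefunction_main} therefore reads ``marked BC entropy with prescribed degree law $+$ Sanov cost for mark counts'', not ``unmarked BC entropy $+$ conditional relative entropy of marks given structure''. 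That the two expressions ultimately agree is a separate (non-trivial) identity, cf.\ the discussion of~\cite{ramanan2023large} in the paper; it is not something the present argument proves or needs.

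Your Sanov step does not go through as written. Conditionally on $G_n$ the vertex and edge marks are i.i.d., but $U(\bar G_n)$ is not an empirical mean of i.i.d.\ samples: each mark sits in the depth-$R$ neighborhood of many roots, so the summands $\delta_{[\bar G_n(v),v]_R}$ share variables and are highly dependent. Sanov's theorem does not apply, and the object $\pi_G\mu\otimes\nu^{\mathrm{mark}}$ is not a well-defined reference measure on $\mathcal P(\bar{\mathcal G}_*)$ against which to take a relative entropy. Making this rigorous would amount to proving a process-level LDP for i.i.d.\ marks on a locally tree-like graph, i.e.\ essentially re-deriving the marked BC entropy from scratch; the paper's mixture route sidesteps this entirely because the only Sanov input it needs is for genuine empirical measures of i.i.d.\ mark labels. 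The same issue infects your lower bound: building an unmarked skeleton near $\pi_g\mu$ and then ``distributing marks consistently with the joint law'' is precisely the construction of Lemma~\ref{lemma:modification}, which the paper carries out for \emph{marked} graphs directly (colors in the colored configuration model encode depth-$(k{-}1)$ \emph{marked} subtrees, not just unmarked shapes), together with the degree-repair Lemma~\ref{le:matrix}. Separating skeleton from marks does not simplify this step and obscures where the marked BC entropy actually comes from.

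You correctly identify exponential tightness from the uniform degree bound and correctly locate the main technical obstacle (realising $\rho_k$ while forcing the exact degree sequence $\vec\ell_n$). But the organising principle you propose would require substantially more than a Sanov estimate to execute, and it does not match the rate-function form the paper actually delivers.
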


Diverging from the mean-field scenario poses a considerable challenge, and identifying the geometric distinctions potent enough to influence processes on these graphs is not always straightforward. For instance, in the voter model, convergence of the opinion density to the Wright-Fisher diffusion holds in a broad spectrum of sparse graphs after appropriate rescaling, mirroring mean-field behavior (see~\cite{ccc}). However, the stochastic Kuramoto model exhibits notable disparities in behavior between sparse and mean-field geometries (see~\cite[Remark~6.11]{OliveiraReisStolerman2018} and~\cite{Ramanan2020}). Despite significant progress in understanding sparse geometries, particularly within the framework of local weak convergence, additional tools are imperative for attaining more nuanced insights into the dynamics of complex systems evolving within these geometries.

Our objective is to advance the arsenal of tools, aiming to replicate results akin to those in~\cite{BOPR}, but now focusing on the uniform graph with a specified degree sequence instead of the Erd\H{o}s-R\'{e}nyi random graph. Just as in~\cite{BOPR}, the BC-entropy remains a cornerstone in our methodology, yet new approximation results are indispensable. These are achieved through the employment of combinatorial lemmas, facilitating the construction of graphs with slightly varied degree sequences while minimally impacting the neighborhoods of a significant portion of vertices (cf. Lemma~\ref{le:matrix}).

\subsection{Application to interacting diffusions.}
Given a random graph $G=(V,E)$, define the Hamiltonian
\begin{equation}\label{eq:hamiltonian}
H_{G}(\vec{x}, \vec{\xi}, \vec{\omega}) = \sum_{v,u\in V} \xi_{v,u} f(x_{v}-x_{u}; \,\omega_v,\omega_u) + \sum_{v\in V} g(x_v; \omega_v),
\end{equation}
where $f$ and $g$ are two functions that represent the potential and the external field, the variables $\vec{\xi}=(\xi_{u,v} : \{u,v\} \in E) \in \R^{2|E|}$ are marks on the oriented edges that represent the strength of interaction associated to the oriented edge $(u,v)$, and $\vec{\omega}=(\omega_{v}; v \in V) \in \R^{|V|}$ are marks on vertices that represent ``media'' variables.

We write $\partial_{v}H_{G}$ for the derivative of $H_{G}$ with respect to the variable $x_{v}$.
Fix a finite time horizon $T>0$ and let $\vec{B} = (B_v: v \in V)$ be an independent family of standard Brownian motions defined on the time interval $[0,T]$.
We consider the system of interacting diffusions over $G$ given by the stochastic process $\vec{\theta}^{G}=(\theta^{G}_{v}(t); v \in V, t \in [0,T])$ that solves the following system of It\^o stochastic differential equations
\begin{equation}\label{eq:sde}
\begin{cases}
{\rm d}\theta^{G}_{v}(t) & = \partial_{v}H_G(\vec{\theta}^{G},\vec{\xi},\vec{\omega}) {\rm d}t + {\rm d}B_{v}(t), \quad 0 \leq t \leq T, \\
\theta^{G}_{v}(0) & = \theta_{v}(0), \quad \text{for all } v \in V.
\end{cases}
\end{equation}

We assume that $f$, $f'$ (derivative with respect to the $x$ variable), $f''$, $g$, $g'$, $g''$ exist, are bounded and continuous. Under this assumptions, when the marked graph $G$ is finite, the system  (\ref{eq:sde}) has a unique strong solution with continuous trajectories (see \cite[Chapter~5, Theorem~2.9]{karatzas2012brownian}). Let us denote by $\vec{\theta}_{n} = \{\theta_{n,v}(s): s \in [0,T] \text{ and } v \in [n]\}$, the solution of the Stochastic Differential Equation~\eqref{eq:sde} where $G$ is the random graph with given degree sequence introduced above. 

In the following we denote by $U(G_{n}, \vec{x}, \vec{\theta}(0), \vec{\omega}, \vec{\xi})$ for the empirical neighborhood distribution (see~\eqref{eq:empirical_neighborhood_distribution}) of the graph $G_{n}$ with oriented edges marked by the entries of the vector $\vec{\xi}$ and vertices marked by the triple $\big( \big(x_{v}(t)\big)_{t \in [0,T]}, \omega_{v}, \theta_{v}(0) \big)$, where $x_{v} \in C[0,T]$, for every $v \in G_{n}$.
\begin{theorem}
    Assume that the collections of marks $\vec{\xi}$, $\vec{\omega}$ and the initial conditions $\vec{\theta}_{n}(0)$ are obtained as i.i.d.\ copies of given arbitrary distributions. If the sequence $\big(U(G_{n}, \vec{B}, \vec{\theta}(0), \vec{\omega}, \vec{\xi})\big)_{n \in \N}$ satisfies a large deviation principle with rate function $I$, then the sequence of empirical neighborhood distributions marked with the solution of the SDE~\eqref{eq:sde}, $\big( U(G_{n}, \vec{\theta}_{n}, \vec{\theta}(0), \vec{\omega}, \vec{\xi}) \big)_{n \in \N}$, satisfies a large deviation principle with speed $n$ and rate function $I-F$, where $F$ is defined taking values of empirical neighborhood distributions as
    \begin{equation}\label{def:F}
        F(\rho) = - \int \bigg(F^{(1)}_T-F^{(1)}_0 + \frac{1}{2}F^{(2)} + \frac{1}{2}F^{(3)} \bigg) {\rm d} \rho,
    \end{equation}
and the functions $F^{(1)}_t$ for $t \in \{0,T\}$,  $F^{(2)}$, and $F^{(3)}$, are defined, for a rooted marked graph $G=(V,E,\vec{x}, \vec{\xi},\vec{\omega},o)$, as
\begin{equation}\label{def:Fi}
\begin{split}
F^{(1)}_t(G)=\sum_{w;w\sim o}& \xi_{o,u}f(x_{o}(t)-x_{u}(t);\,\omega_o,\omega_u)+g(x_o(t);\omega_o),\\
F^{(2)}(G)=\int_{0}^{T}\bigg[&\sum_{u;u\sim o}\xi_{o,u}f'(x_o(t)-x_u(t);\omega_o,\omega_u)\\
&-\sum_{u;u\sim o}\xi_{u,o}f'(x_u(t)-x_o(t);\omega_o,\omega_u)+g'(x_o(t);\omega_o)\bigg]^2{\rm d}t,\\
F^{(3)}(G)=\int_{0}^{T}\bigg[&\sum_{u;u\sim o}\xi_{o,u}f''(x_o(t)-x_u(t);\omega_o,\omega_u)\\
&+\sum_{u;u\sim o}\xi_{u,o}f''(x_u(t)-x_o(t);\omega_o,\omega_u)+g''(x_o(t);\omega_o)\bigg]{\rm d}t.
\end{split}
\end{equation}
\end{theorem}

The proof of the theorem above follows from our Theorem~\ref{t:main_intro} combined with the general discretization scheme presented in Lemma 4.2 of~\cite{BOPR} and a direct application of Theorem 7.1 of the same paper.

\subsection{Overview of the proof.}

The proof of the large deviation result for the sequence in~\eqref{eq:empirical_neighborhood_distribution} unfolds in two pivotal steps, delineated below.

In the initial step, we pivot from the complexity of $\bar{G}_n$ through a mixture argument. Instead of contending with i.i.d.\ marks, our focus shifts to marked graphs uniformly drawn from a set characterized by prescribed numbers of each mark type, denoted as $\mathcal{G}(\vec{\ell}_n)_{\vec{m}_n, \vec{u}_n}$. Here, vectors $\vec{m}_n$ and $\vec{u}_n$ count the amount vertices and edges designated for each mark type (refer to Section~\ref{sec:degreesequence} for a precise definition). After establishing a large deviation estimate for these random graphs, general results pertaining to mixtures of probability measures imply our main result.

The subsequent step consists on checking that this class of auxiliary marked graphs indeed satisfies a large deviation principle. This involves securing precise asymptotic bounds for quantities such as
\begin{equation}\label{eq:a}
|\{G\in \mathcal{G}(\vec{\ell}_n)_{\vec{m}_n,\vec{u}_n} : U(G)\in B(\rho,\delta)\}|,
\end{equation}
 where $B(\rho,\delta)$ denotes a fixed ball of radius $\delta>0$ around the distribution on marked trees $\rho$. This approach mirrors that of Delgosha and Anantharam~\cite{da}, given that the model under consideration aligns with the one studied in~\cite{da}, with the added nuance of the vector of prescribed degrees $\vec{\ell}_n$.

The primary challenge resides in utilizing the truncated version of the BC-entropy to attain the desired lower bound for \eqref{eq:a}. To accomplish this, we establish the existence of a sequence $\Gamma_n$ of marked graphs such that, when truncated at height $k$, $U(\Gamma_n)_k \to \rho_k$, concurrently maintaining $\deg_{\Gamma_n}(i)=\ell_{n,i}$. Our explicit proof hinges on the construction of a generalized configuration model for a specially crafted sequence of colored degrees. Simultaneously, we demonstrate some type of ``mass transport'' on matrices of colored degrees. Specifically, we prove that such matrices, when linked to a sequence of marked graphs, allows one to perform certain modifications that respect a series of contraints required by the model. These constraints are crucial for transitioning from a matrix of colored degrees to constructing a sequence of graphs with the desired properties, employing the framework of the generalized configuration model and the Reconstruction Lemma as detailed in~\cite[Proposition~8]{da}.

\subsection{Related works}

 Most works deal with unmarked graphs, where large deviations of several quantities have been studied, as described in the survey paper by Chatterjee~\cite{chatterjee}.
 
A lot of attention was devoted to the dense regime, where degrees grow linearly with the size of the graph and results are framed in the setting of the cut topology introduced by Lov\'{a}sz and Szegedy~\cite{ls}. Chatterjee and Varadhan~\cite{cv} establish large deviations for Erd\H{o}s-R\'{e}nyi random graphs when $p$ remains constant. Dhara and Sen~\cite{dhara_sen} consider the uniform graph with given degrees and establish a large deviation principle under mild conditions that guarantee the existence of a graphon limit, as established by Chatterjee, Diaconis and Sly~\cite{cds}. Markering~\cite{markering} extends the previous results and considers inhomogeneous Erd\H{o}s-R\'{e}nyi random graphs in the same dense setting. Finally, we mention also Dembo and Lubetzki~\cite{dl} that consider the uniform graph with given number of edges.

For the sparse regime, the framework of local weak convergence introduced by Benjamini and Schramm~\cite{bs} and Aldous and Steele~\cite{as} is widely used, since a strikingly large collection of graph functionals is continuous under this topology. In this sense, studying large deviations for empirical neighborhoods allows one to obtain similar estimates for such functionals. Large deviation estimates for graphs in this regime were first treated by Bordenave and Caputo~\cite{bc}, specifically for the Erd\H{o}s-R\'{e}nyi random graph and unifrom random graph with given degree. More recently, Backhausz, Bordenave, and Szegedy~\cite{bbs} consider the case of uniformly sampled $d$-regular random graphs as well as unimodular Galton Watson trees.

When one turns to marked graphs, the picture is more scarce. Based on~\cite{bc}, Delgosha and Anantharam~\cite{da} introduce the entropy notion that is central in our work and deduce a weak large deviation principle for the uniform marked graph with given mark counting vectors (see Section~\ref{sec:da_results}). This was then extended by Baldasso, Oliveira, Pereira, and Reis~\cite{BOPR} to cover the case of sparse marked Erd\H{o}s-R\'{e}nyi random graphs.

In the recent work~\cite{ramanan2023large} the authors show that the rate function obtained in~\cite{da,BOPR} admits a more tractable expression, involving only relative entropies, for the models of uniform marked graphs with fixed number of edges and Erd\"os-R\'enyi graphs with i.i.d.\ marks. Furthermore, they also obtain an alternative expression for the rate function of the large deviation principle of the model of uniform graph with fixed degree sequence, the same model we consider in this present work. We notice that \cite[Theorem~7.11]{ramanan2023large} relies on the proof of a large deviation principle for the model of uniform graph with fixed degree sequence and make reference to a forthcoming paper~\cite{ramanan2023large2}. The present work complements~\cite{ramanan2023large} in the sense that we prove the large deviation principle mentioned in~\cite[Theorem~7.11]{ramanan2023large}. We also mention that our work will have intersection with the announced work~\cite{ramanan2023large2}.

Many works deal with large deviation for specific quantities such as subgraph counts~\cite{augeri, bb, cd, ggs, hmw, vu}, functionals of the adjacency and Laplacian matrices~\cite{bg, hhm, markering}, and components size~\cite{aklm, akp}.

Let us delve into the landscape of research on interacting diffusions atop random graphs, one of the main potential applications of the current manuscript. In the realm of gradient dynamics on the complete graph, Dai Pra and den Hollander~\cite{dpdh} set the stage. This was later adapted by the current authors~\cite{bpr} to the non-gradient case, incorporating delays into the mix.

Transitioning to the Erd\"{o}s-R\'{e}nyi random graph, Delattre, Giacomin, and Lu\c{c}on~\cite{Delattre2016} provide insights by comparing the system with its mean-field counterpart. They establish bounds on the distances between solutions, assuming the mean degree diverges logarithmically. In the dense regime, Oliveira and Reis~\cite{or} extend these discussions with large deviation estimates for solution paths, offering a nuanced exploration. For an in-depth examination of works in the dense regime, we direct the reader to~\cite{or}.

Venturing into the sparse setting, Lacker, Ramanan, and Wu~\cite{Kavita2019} and Oliveira, Reis, and Stolerman~\cite{OliveiraReisStolerman2018} uncover the convergence of solutions to a meticulously defined model on the Galton-Watson random tree. However, their results do not explicitly delve into the realm of large deviations.

MacLaurin~\cite{2016maclaurin} contributes to the landscape by considering an interacting particle system defined through stochastic differential equations in networks that converge locally weakly to $\mathbb{Z}^d$, tailored for the sparse setting. Their large deviation principle is derived by applying transformations to an existing known result (see \cite[Section~4]{2016maclaurin}). It is worth noting that these methods appear to be tailored for networks converging locally weakly to $\mathbb{Z}^d$ and do not explicitly address other networks such as Erd\"{o}s-R\'{e}nyi or the uniform graph with given degrees in the sparse regime.

\bigskip

\noindent \textbf{Acknowledgments.}
RB has counted on the support of ``Conselho Nacional de Desenvolvimento Científico e Tecnológico - CNPq'' grants ``Projeto Universal'' (402952/2023-5) and ``Produtividade em Pesquisa'' (308018/2022-2). AP was funded by ``Alagoas Research Foundation - FAPEAL''
(E:60030.0000002397/2022) and ``Conselho Nacional de Desenvolvimento Científico e Tecnológico - CNPq'' grants ``Projeto Universal'' (402952/2023-5).
GR was partially supported while he was a post doc at Technical University of Munich (TUM).

\section{Preliminaries}\label{sec:prel}
~
\par In this section, we fix some notation that is used throughout the rest of the paper and review some results from the literature. We start by introducing the notation for marked rooted graphs, graph isomorphisms, and metrics on the set of such graphs. Furthermore, we recall the concept introduced in~\cite{as, bs} of local weak convergence and the definition of the BC-entropy introduced in~\cite{da}.

\subsection{Marked graphs}
~

A \textit{graph} $G=(V,E)$ consists of a set $V$ of vertices and a set $E$ of edges. All graphs considered here have finite or countably infinite vertex sets and are always assumed to be locally finite. A \emph{rooted graph} $(G,o)$ is a graph $G$ together with a distinguished vertex $o \in V$. We write $\sG_{*}$ the space of connected locally finite rooted graphs.

For $u,v \in V$, we write $u \sim_G v$ if these vertices are adjacent. The degree of $u$ in $G$ is denoted by $\deg_G(u)$. The induced distance by $G$ between $u$ and $v$, denoted by $\dist_{G}(u,v)$, is the size of the smallest path connecting $u$ to $v$.

\par Let $(\Theta,{\rm d}_{\Theta})$ and $(\Xi,{\rm d}_{\Xi})$ be two finite metric spaces. A \emph{marked graph} $\bar{G}=(G,\vec{\tau},\vec{\xi})$ is a graph $G=(V,E)$ endowed with fields of marks
\begin{equation}\label{eq:fields-mark}
\vec{\tau}=(\tau(v))_{v\in V} \quad \text{and} \quad \vec{\xi}=(\xi(v,w))_{\{v,w\}\in E},
\end{equation}
where $\tau(v) \in \Theta$, for all $v \in V$, and $\xi(v,w) \in \Xi$, for all $\{v,w\} \in E$. Notice that, although edges are not directed, each edge receives two marks, one for each possible orientation. We denote by $\bar{\sG}=\bar{\sG}_{(\Theta,\Xi)}$ the space of connected and locally finite marked graphs with mark spaces $\Theta$ and $\Xi$.

An \textit{isomorphism} between two rooted marked graphs $(\bar{G},o)=$ and $(\bar{G}',o)$ is a bijective map $\Psi: V \to V'$ that preserves edges, marks, and the root. For a marked graph $\bar{G}$ and a radius $r\in \N$, $(\bar{G},o)_{r}$ denotes the marked subgraph of $\bar{G}$ induced by the vertices within distance $r$ from $o \in V$.
 
Let $\bar{\sG}_{*}$ denote the space of (connected and locally finite) rooted marked graphs up to isomorphism. We often identify classes with representative elements without further mention to this. The spaces $\bar{\sG}_{*}$ and $\sG_{*}$ can be endowed with metrics ${\rm d}_{\bar{\sG}_*}$ and ${\rm d}_{\sG_{*}}$ that turn them into  Polish metric spaces (cf.~\cite{bordenave}).
 
We write $\bar{\sT}_*\subset \bar{\sG}_{*}$ for the space of (connected and locally finite) rooted marked trees up to isomorphism. Also, denote by $\bar{\sG}^{h}_{*}$ the set of rooted marked connected and locally finite graphs with depth at most $h$, meaning that all vertices are within distance $h$ from the root.
 
There exists a natural projection $\pi_g: \bar{\sG}_{*}\to\sG_{*}$ that associates to each marked graph $\bar G$ its graph component $\pi_g(\bar G)$. We will use throughtout the text that the map $\pi_{g}$ is a weak contraction, that is, a one-Lipschitz map.

Finally, let $\sG_n$ denote the set of graphs and $\bar{\sG_n}$ the set of marked graphs with vertex set $[n]$ (notice that these sets include graphs that are not connected).

\bigskip

\noindent{\textbf{Local weak convergence.}}
For a (not necessarily connected) marked graph $\bar{G}$ and a vertex $v \in V$, we associate the rooted marked graph $(\bar{G}(v),v)$ given by the marked graph induced by the connected component of $v$ rooted at $v$. We write $[\bar{G}(v),v] \in \bar{\sG}_{*}$ for the equivalence class of $(\bar{G}(v),v)$.

\par For a finite marked graph $\bar{G}$ on $n$ vertices, its \emph{empirical neighborhood distribution} is the probability measure defined in the set $\bar{\sG}_{*}$ via~\eqref{eq:empirical_neighborhood_distribution}. We will use a slight abuse of notation and also denote by $U(G)$ the empirical neighborhood distribution of an unmarked graph $G$. In this case, $U(G)$ is a probability distribution in $\sG_{*}$.

\par Denote by $\mathcal{P}\left(\bar{\sG}_{*}\right)$ the collection of probability measures on $\bar{\sG}_{*}$ endowed with the weak convergence topology. This space can be metrized by the L\'{e}vy-Prokhorov metric $\d_{LP}$, defined as follows. Denote by ${\mathcal  {B}}(\bar{\sG}_{*})$ the Borel $\sigma$-algebra of the metric space $(\bar{\sG}_*,{\rm d}_{\bar{\sG}_*})$. For each $A \in {\mathcal  {B}}(\bar{\sG}_{*})$ let 
\begin{equation*}
A^{{\varepsilon }} = \{ g \in \bar{\sG}_{*} : {\rm d}_{\bar{\sG}_*}(g,g') <\varepsilon, \text{ for some } g' \in A \}.
\end{equation*}
For $\mu, \nu \in \mathcal{P}\left(\bar{\sG}_{*}\right)$, define 
\begin{equation*}
\begin{split}
\d_{LP}(\mu ,\nu )= \inf \big\{\varepsilon >0 : \mu & (A)\leq \nu (A^{{\varepsilon }})+\varepsilon \ {\text{and}}\\ 
\nu & (A)\leq \mu (A^{{\varepsilon }})+\varepsilon \ {\text{for all}}\ A\in {\mathcal  {B}}(\bar{\sG}_{*})\big\}.
\end{split}
\end{equation*}

The following lemma is used in Section~\ref{sec:proof_t_marked_configuration_model}.

\begin{lemma}\label{l:lipschitz}
Let $S$ be a metric space. Given $f: \bar{\sG}_{*} \to E$, define $F: \mathcal{P} (\bar{\sG}_{*}) \to \mathcal{P}(S)$ as $F(\mu) = \mu \circ f^{-1}$.  If $f$ is $\alpha$-Lispchitz in the L\'{e}vy-Prokhorov metric, for some $\alpha \geq 1$, then so is the map $F$.
\end{lemma}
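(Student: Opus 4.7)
The plan is a short direct computation from the definition of the Lévy-Prokhorov metric, after observing the elementary inclusion that the $\varepsilon$-thickening of a preimage is contained in the preimage of the $\alpha\varepsilon$-thickening.

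First, I would interpret the Lipschitz hypothesis in the natural way: $f$ is a measurable map between the underlying metric spaces $\bar{\sG}_*$ and $E$ satisfying $\d_E(f(g),f(g')) \le \alpha\, \d_{\bar{\sG}_*}(g,g')$; Lipschitz continuity gives measurability, so $F(\mu) = \mu \circ f^{-1}$ is well defined on Borel sets of $E$. I would then fix $\mu,\nu \in \mathcal{P}(\bar{\sG}_*)$ and any $\varepsilon > \d_{LP}(\mu,\nu)$, so that
\begin{equation*}
\mu(A) \le \nu(A^{\varepsilon}) + \varepsilon \quad\text{and}\quad \nu(A) \le \mu(A^{\varepsilon}) + \varepsilon
\end{equation*}
hold for every $A \in \mathcal{B}(\bar{\sG}_*)$.

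The key observation is the inclusion
\begin{equation*}
\bigl(f^{-1}(B)\bigr)^{\varepsilon} \subseteq f^{-1}(B^{\alpha\varepsilon}), \qquad B \in \mathcal{B}(E).
\end{equation*}
Indeed, if $g$ lies in the left-hand side, there is $g' \in f^{-1}(B)$ with $\d_{\bar{\sG}_*}(g,g') < \varepsilon$; the $\alpha$-Lipschitz property gives $\d_E(f(g), f(g')) < \alpha\varepsilon$ with $f(g') \in B$, so $f(g) \in B^{\alpha\varepsilon}$. Applying $\nu$ to both sides and using the defining inequality for the Lévy-Prokhorov metric, I would obtain
\begin{equation*}
F(\mu)(B) = \mu(f^{-1}(B)) \le \nu\bigl((f^{-1}(B))^{\varepsilon}\bigr) + \varepsilon \le \nu\bigl(f^{-1}(B^{\alpha\varepsilon})\bigr) + \alpha\varepsilon = F(\nu)(B^{\alpha\varepsilon}) + \alpha\varepsilon,
\end{equation*}
where the last inequality uses $\alpha \ge 1$. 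The symmetric bound follows by exchanging $\mu$ and $\nu$, which shows $\d_{LP}(F(\mu),F(\nu)) \le \alpha\varepsilon$. Letting $\varepsilon \downarrow \d_{LP}(\mu,\nu)$ yields the claim.

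There is no real obstacle here; the only point to be careful about is the role of the hypothesis $\alpha \ge 1$, which is needed only in absorbing the additive $\varepsilon$ into the final $\alpha\varepsilon$. Everything else is a formal verification from the definitions of the pushforward and the Lévy-Prokhorov metric.
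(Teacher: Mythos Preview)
Your proof is correct and follows essentially the same route as the paper: both use the inclusion $(f^{-1}(B))^{\varepsilon}\subseteq f^{-1}(B^{\alpha\varepsilon})$ from the Lipschitz hypothesis, then chain this with the defining inequalities of $\d_{LP}$ and absorb the additive $\varepsilon$ into $\alpha\varepsilon$ using $\alpha\ge 1$. The only cosmetic difference is that you take $\varepsilon>\d_{LP}(\mu,\nu)$ and pass to the limit at the end, which is arguably slightly cleaner since the infimum in the definition need not a priori be attained.
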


\begin{proof}
    Fix $\mu, \nu \in \mathcal{P}(\bar{\sG}_{*})$ and $\varepsilon>0$ such that ${\rm d}_{LP}(\mu, \nu) \leq \varepsilon$. In order to conclude the lemma, it suffices to prove that ${\rm d}_{LP}(F(\mu), F(\nu)) \leq \alpha\varepsilon$.
    
    Given a measurable set $A \subset S$, ${\rm d}_{LP}(\mu, \nu) \leq \varepsilon$ implies for the set $f^{-1}(A)$
    \begin{equation}
        \mu(f^{-1}(A)) \leq \nu ( f^{-1}(A)^{\varepsilon})+ \varepsilon \quad \text{and} \quad  \nu(f^{-1}(A)) \leq \mu ( f^{-1} (A)^{\varepsilon})+ \varepsilon.
    \end{equation}

    Notice now that, since $f$ is $\alpha$-Lipschitz,
    \begin{equation}
        f^{-1}(A)^{\varepsilon} \subseteq f^{-1}(A^{\alpha \varepsilon}).
    \end{equation}
    This then implies, since $\alpha \geq 1$,
    \begin{equation}
    F(\mu)(A) = \mu(f^{-1}(A)) \leq \nu ( f^{-1}(A)^{\varepsilon})+ \varepsilon \leq \nu ( f^{-1}(A^{\alpha \varepsilon}))+ \alpha\varepsilon = F(\nu)(A^{\alpha\varepsilon}) + \alpha\varepsilon.
    \end{equation}
    An analogous calculation yields $F(\nu)(A) \leq F(\mu)(A^{\alpha\varepsilon}) + \alpha\varepsilon$, which implies ${\rm d}_{LP}(F(\mu), F(\nu)) \leq \alpha\varepsilon$ and concludes the proof of the lemma.
\end{proof}

\begin{definition}[Local weak convergence]\label{def:localweak}
Consider a sequence $\bar{G}_n=([n],E_n,\vec{\tau}_n,\vec{\xi}_n)$ of marked graphs and let $\rho \in \mathcal{P}\left(\bar{\sG}_{*}\right)$. We say that $\bar{G}_n$ converges locally weakly to $\rho$ if $U(\bar{G}_n)$ converges to $\rho$ in the sense of weak convergence.
\end{definition}

\bigskip

\noindent{\textbf{Unimodularity.}}
Let $\bar{\sG}_{**}$ denote the set of connected marked graphs with two distinguished vertices up to isomorphisms that preserve both vertices.
 
\par A probability measure $\mu \in \mathcal{P}(\bar \sG_* )$ on the set of connected rooted marked graphs is called \textit{unimodular} if, for any measurable non–negative function $f: \bar{\sG}_{**} \rightarrow \R_+$,
\begin{equation}\label{eq:unimodularity}
\int \sum_{v\in V (G)} f([G,o,v]) \, \d\mu([G,o]) =
\int \sum_{v\in V (G)} f([G,v,o]) \, \d\mu([G,o]).
\end{equation}
The set of unimodular probability measures on $\bar{\sG}_*$ is denoted by $\mathcal{P}_u (\bar{\sG}_*)$.

For an in depth discussion of properties of unimodular measures we refer the reader to~\cite{as}. A proposition that is important for us is the following.
\begin{proposition}
The set $\mathcal{P}_u (\bar{\sG}_*)$ is closed in $\mathcal{P}(\bar \sG_* )$.
\end{proposition}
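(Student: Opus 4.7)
The goal is to show that if $\mu_n \to \mu$ weakly in $\mathcal{P}(\bar{\sG}_*)$ with each $\mu_n$ unimodular, then $\mu$ satisfies \eqref{eq:unimodularity} as well. The main obstacle is that weak convergence only controls integrals against bounded continuous functions, while the integrands $[G,o]\mapsto\sum_v f([G,o,v])$ appearing in \eqref{eq:unimodularity} are generally neither bounded nor obviously continuous on $\bar{\sG}_*$. The strategy is to first establish the identity for a restricted class of test functions for which these sums \emph{are} bounded and continuous, and then to extend the conclusion by a monotone class argument combined with monotone convergence.

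Fix integers $R,M\geq 1$ and let $\mathcal{C}_{R,M}$ denote the class of bounded continuous $f:\bar{\sG}_{**}\to\R_+$ supported on
\begin{equation*}
    S_{R,M}=\bigl\{[G,o,v]:\ \dist_G(o,v)\leq R,\ |B_R(o)|\leq M,\ |B_R(v)|\leq M\bigr\},
\end{equation*}
where $B_R(o)$ denotes the ball of radius $R$ around the root. In the local topology the ball sizes $|B_R(\cdot)|$ and the distance $\dist_G(\cdot,\cdot)$ are locally constant, so the indicator of $S_{R,M}$ is continuous. For $f\in\mathcal{C}_{R,M}$ the functions
\begin{equation*}
    \Phi_f([G,o])=\sum_{v\in V(G)} f([G,o,v]), \qquad \Psi_f([G,o])=\sum_{v\in V(G)} f([G,v,o])
\end{equation*}
vanish unless $|B_R(o)|\leq M$, and otherwise have at most $|B_R(o)|\leq M$ nonzero terms, each bounded by $\|f\|_\infty$; hence both are bounded by $M\|f\|_\infty$. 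They are also continuous on $\bar{\sG}_*$, because if $[G_n,o_n]\to[G,o]$ then eventually the $R$-neighborhoods of $o_n$ and $o$ are isomorphic, so the index sets of the two sums can be matched via the isomorphism and each summand converges by continuity of $f$. Weak convergence of $\mu_n$ combined with the unimodularity of each $\mu_n$ then gives $\int \Phi_f\,\d\mu=\lim_n\int \Phi_f\,\d\mu_n=\lim_n\int \Psi_f\,\d\mu_n=\int \Psi_f\,\d\mu$, so \eqref{eq:unimodularity} holds for $\mu$ against every $f\in\mathcal{C}_{R,M}$.

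It remains to extend the identity to arbitrary non-negative measurable $f$. Letting $M\uparrow\infty$ and invoking monotone convergence, using local finiteness to ensure $|B_R(o)|<\infty$ pointwise, extends the identity to bounded continuous $f$ supported on $\{\dist_G(o,v)\leq R\}$; then $R\uparrow\infty$ covers all bounded continuous $f\geq 0$; a functional monotone class argument extends to bounded Borel measurable $f\geq 0$, exploiting that $\bar{\sG}_{**}$ is Polish so its Borel $\sigma$-algebra is generated by bounded continuous functions; finally $f\wedge K\uparrow f$ handles unbounded $f$. The only step with genuine content is the construction of $\mathcal{C}_{R,M}$ so that $\Phi_f$ and $\Psi_f$ are simultaneously bounded and continuous; the remaining extensions are routine measure-theoretic manipulations.
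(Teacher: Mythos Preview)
The paper does not prove this proposition; it is simply recorded as a known fact with a pointer to the literature on unimodular measures. There is therefore no argument in the paper to compare yours against.

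Your proof is correct in outline and isolates the essential idea: restricting to test functions supported on the clopen sets $S_{R,M}$ forces both $\Phi_f$ and $\Psi_f$ to be bounded and continuous on $\bar{\sG}_*$, after which weak convergence can be applied directly. Two places could be tightened. First, the continuity of $\Phi_f$ relies on the fact that convergence $[G_n,o_n]\to[G,o]$ in $\bar{\sG}_*$, together with $v\in B_R(o)$, yields convergence $[G_n,o_n,v_n]\to[G,o,v]$ in $\bar{\sG}_{**}$ for the image $v_n$ of $v$ under the local isomorphism; this follows from the inclusion $B_r(v)\subset B_{r+R}(o)$, but it is worth stating. Second, the phrase ``functional monotone class argument'' is slightly loose here: the set of nonnegative $f$ satisfying the identity is a cone rather than a vector space, and the constant function $1$ fails the integrability needed to invoke the usual theorem. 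The cleanest way to finish is to observe that $A\mapsto \int\sum_v \mathbf{1}_A([G,o,v])\,\d\mu$ and $A\mapsto\int\sum_v \mathbf{1}_A([G,v,o])\,\d\mu$ define $\sigma$-finite Borel measures on $\bar{\sG}_{**}$ (each finite on $S_{R,M}$, and $\bar{\sG}_{**}=\bigcup_{R,M}S_{R,M}$); you have shown they agree on bounded continuous functions, hence they coincide, which gives the identity for all nonnegative measurable $f$ at once. None of this affects the validity of your approach.
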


\subsection{Review of BC entropy and LDP}\label{sec:da_results}
~
\par In this section we review some of main results from~\cite{da}, that introduced a cornerstone model in order to consider more general random marked graphs. We first introduce additional notation and afterwards collect some of their results that are used throughout our text.

\bigskip

\noindent{\textbf{Count vectors and degrees.}} In this entire section, we assume that the metric spaces $\Theta$ and $\Xi$ are endowed with an order $\leq$. Notice that this is possible due to the fact that the spaces are finite.

Let $G$ be a finite marked graph. We define the \emph{edge-mark count vector} of $G$ by $\vec{m}_G := (m_G (x,x') \,:\, x,x'\in \Xi)$, where $m_G (x,x')$ is the number of \emph{oriented edges} with associated marks $x$ and $x'$, i.e., number of oriented edges $(v,w)$ such that $\xi_{G}(v,w) = x$ and $\xi_{G}(w,v)=x'$  or $\xi_{G}(v,w) = x'$ and $\xi_{G}(w,v)=x$. Analogously, the \emph{vertex-mark count vector} of $G$ is $\vec{u}_G := (u_G (\theta) : \theta \in \Theta)$, where $u_G (\theta)$ is the number of vertices $v \in V (G)$ with $\tau_G (v) = \theta$.

Let us now introduce some more notation that will allow us to explore the orientation used for edge marks. For a pair $(x,y) \in \Xi \times \Xi$, we write $(x,y)_{\leq} = (x,y)$ if $x \leq y$ and $(x,y)_{\leq} = (y,x)$, otherwise. We write $\chi^{2}_{\leq}$ for the distribution of $X_{\leq}$, if $X \in \Xi^{2}$ is sampled according to $\chi \otimes \chi$. We also denote by
\begin{equation}\label{eq:xi_2_leq}
\Xi^{2}_{\leq} = \{ (x, x') \in \Xi^{2} : x \leq x'\}.
\end{equation}

\par For a vertex $o \in V (G)$, $\deg^{x,x'}_G(o)$ denotes the number of vertices $v$ connected to $o$ in $G$ such that $\xi_G (v,o) = x$ and $\xi_G (o,v) = x'$. Notice that $\deg_G (o) = \sum_{x,x'\in \Xi} \deg^{x,x'}_G(o)$.

For $\mu \in \mathcal{P}(\bar \sG_*)$, and $\theta \in \Theta$, we define
\begin{equation*}
\Pi_\theta (\mu)= \mu (\tau_G(o)=\theta) \mbox{ and } \vec{\Pi}(\mu)=(\Pi_\theta(\mu):\theta \in \Theta).
\end{equation*}
Also, for $x,x' \in \Xi$, let
\begin{equation}\label{eq:mean-degrees}
\begin{split}
\deg^{x,x'} (\mu) & = \E_{\mu}(\deg^{x,x'}_G(o)),\\
\deg(\mu )& = \sum_{x,x'}\deg^{x,x'} (\mu),\\
\vec{\deg}(\mu) & = (\deg^{x,x'}(\mu)).
\end{split}
\end{equation}

\bigskip

\noindent{\textbf{The DA model.}} Let us now introduce the model considered in~\cite{da}. Fix a sequence of edge-mark count vectors and a sequence of vertex-mark count vectors
\begin{equation}\label{eq:def_u_m}
\begin{split}
\vec{u}_n&=(u_n(\theta) : \theta \in \Theta),\\
\vec{m}_n&=(m_n(x,x') : x,x' \in \Xi) \mbox{ with } m_n(x,x')=m_n(x',x).
\end{split}
\end{equation}

\begin{remark}
Notice that in order to determine an edge-mark count vector $m_n$ it is only necessary to characterize $m_{n,{\leq}}$, by setting, for $x \leq y$, $m_{n,\leq}(x,y) = m_n(x,y) = m_n(y,x)$, by the symmetry assumption in the equation above.
\end{remark}

For $\vec{m}_n$, we define
\begin{equation}\label{def:norm1}
\big\|\vec m_n \big\|_{1} = \frac{1}{2}\sum_{x \neq x'} m_n(x,x')+\sum_{x} m_n(x,x) = \sum_{x \leq x'}m_n(x,x'),
\end{equation}
and, for $\vec{u}_n$, we have $\pnorm{1}{\vec u} = \sum_{\theta \in \Theta} u_n(\theta)$.

For each $n\in \N$, define
\begin{equation*}
\sG_{\vec m_n,\vec u_n}=\{G \in \bar{\sG}_{n} : \vec{u}_G=\vec u_n \text{ and } \vec{m}_G=\vec m_n\},
\end{equation*}
the collection of graphs with vertex- and edge-mark vectors given by $\vec u_n$ and $\vec{m}_n$, respectively.

Let $G_n$ be uniformly sampled from $\sG_{\vec m_n,\vec u_n}$, and recall that $U(G_{n})$ denotes its empirical neighborhood distribution, see~\eqref{eq:empirical_neighborhood_distribution}. In~\cite{da}, the authors introduce a notion of entropy for such random graphs, which they call BC entropy, and provide a weak large deviation principle for the sequence of random probability measures $\big( U(G_{n}) \big)_{n \in \N}$.

The asymptotic behavior of the quantities of interest depend on the limits of the vectors $\vec{u}_n$ and $\vec{m}_n$. With this in mind, we introduce the average-degree vector as
\begin{equation*}
\vec{d}=\Big( d_{x,x'}\in \R_+ : x,x' \in \Xi, d_{x,x'}=d_{x',x}, \sum_{x,x'}d_{x,x'}>0 \Big).
\end{equation*}
The vector $\vec{d}$ plays the role of the limit of $\vec{m}_n/n$. We also introduce a distribution $Q=(q_\theta)_{\theta\in \Theta}$ over $\Theta$. Likewise, $Q$ will be the limit of $\vec{u}_n/n$.

\begin{definition}\label{def:adapted}
Given an average-degree vector $\vec d$ and a probability distribution $Q = (q_\theta)_{\theta \in \Theta}$, we say that a sequence $(\vec m_n ,\vec u_n)$ of edge- and vertex-mark count vectors is adapted to $( \vec d,Q)$, if the following conditions hold:
\begin{enumerate}
\item $\pnorm{1}{\vec m_n}\leq \binom{n}{2}$ and $\pnorm{1}{\vec u}=n$. This guarantees that $\sG_{\vec m_n,\vec u_n}$ is not empty.
\item $m_n(x,x)/n\to d_{x,x}/2$, for all $x \in \Xi$.
\item $m_n(x,x')/n\to d_{x,x'}=d_{x',x}$, for all $x \neq x' \in \Xi$.
\item $u_n(\theta)/n\to q_\theta$, for all $\theta \in \Theta$.
\item If $d_{x,x'}=0$, then $m_n(x,x')=0$, for all $n$.
\item $q_\theta=0$ implies $u_n(\theta)=0$, for all $n$.
\end{enumerate}
\end{definition}

\bigskip

\noindent{\textbf{The large deviation principle.}} Let $\mu \in \mathcal{P}(\bar \sG_*)$ and denote by $B(\mu,\varepsilon)$ the ball of radius $\varepsilon$ in the L\'{e}vy-Prokhorov metric in $\mathcal{P}(\bar \sG_*)$. The large deviation result concerns the exponential decay behavior of the sequence of probabilities
\begin{equation*}
\P(U(G_n)\in B(\mu,\varepsilon)).
\end{equation*}

Consider the set
\begin{equation*}
\sG_{\vec m_n,\vec u_n}(\mu,\varepsilon) = \left\{ G\in \sG_{\vec m_n,\vec u_n} : \d_{LP}(U(G),\mu)<\varepsilon \right\}.
\end{equation*}
Since $G_n$ is uniformly distributed over $\sG_{\vec{m}_n,\vec{u}_n}$, it follows by definition that
\begin{equation*}
\log \P(U(G_n)\in B(\mu,\varepsilon))=\log \big|\sG_{\vec m_n,\vec u_n}(\mu,\varepsilon)\big|-\log \big|\sG_{\vec m_n,\vec u_n}\big|.
\end{equation*}

If $(\vec m_n ,\vec u_n)$ is adapted to $(\vec d, Q)$, it is proved in \cite[Equation~10]{da} that 
\begin{equation}\label{eq:sizeGmu}
\log \big|\sG_{\vec m_n,\vec u_n}\big|=\big\|{\vec m_n}\big\|_{1}\log n+nH(Q)+n\sum_{x,x'}s(d_{x,x'})+o(n),
\end{equation}
where $H(Q) = -\sum_{\theta \in \Theta} q_{\theta} \log q_{\theta}$ denotes the entropy of the probability measure $Q$ and $s(d)$ is given by
\begin{equation}
s(d) = \begin{cases}
\frac{d}{2}-\frac{d}{2} \log d, & \quad \text{ if } d>0, \\
0, & \quad \text{ if } d=0.
\end{cases}
\end{equation}

Delgosha and Anantharam~\cite{da} introduced the notion of $\varepsilon$-upper BC entropy
\begin{equation}\label{eq:def_entropy}
\overline \Sigma_{\vec d,Q}(\mu,\varepsilon)\vert_{\vec m_n,\vec u_n} = \limsup_{n\to\infty}\frac{\log |\sG_{\vec m_n,\vec u_n}(\mu,\varepsilon)|-\pnorm{1}{\vec m_n}\log n}{n}.
\end{equation}
Taking the limit $\varepsilon\to 0$, the upper BC entropy is defined as
\begin{equation*}
\overline \Sigma_{\vec d,Q}(\mu)\vert_{\vec m_n,\vec u_n}=\lim_{\varepsilon \downarrow 0}\overline \Sigma_{\vec d,Q}(\mu,\varepsilon)\vert_{\vec m_n,\vec u_n}.
\end{equation*}
Similar definitions hold when we exchange the superior limits for inferior limits and, in this case, the lower BC entropy is denoted by $\underline{\Sigma}$.

Recall that $s(d)=d/2-(d/2)\log d$, if $d>0$, and $s(d)=0$ if $d=0$. For an average-degree vector $\vec{d}$, we denote by $s(\vec{d})$ the quantity
\begin{equation}\label{def:s}
s(\vec{d})=\sum_{x,x'}s(d_{x,x'}).
\end{equation}

The next result establishes the central piece of the large deviation principle. It proves that the upper and lower BC entropies coincide and that they do not depend on the adapted sequence $(\vec{m}_n, \vec{u}_n)$.
\begin{theorem}[\cite{da}, Theorem~2]\label{t:equality_entropy}
Let $\mu\in \mathcal{P}(\bar\sG_*)$ with $0<\deg(\mu)<\infty$. The following statements hold.
\begin{enumerate}
\item The values of $\overline \Sigma_{\vec d,Q}(\mu)\vert_{\vec m_n,\vec u_n}$ and $\underline \Sigma_{\vec d,Q}(\mu)\vert_{\vec m_n,\vec u_n}$ do not depend on the adapted sequence $(\vec{m}_n,\vec{u}_n)$. For this reason, we simplify the notation by writing $\overline \Sigma_{\vec d,Q}(\mu)$ and $\underline \Sigma_{\vec d,Q}(\mu)$. 
\item The equality $\overline \Sigma_{\vec d,Q}(\mu)=\underline \Sigma_{\vec d,Q}(\mu)$ holds. This value is simply written as $ \Sigma_{\vec d,Q}(\mu)$ and it belongs to $[-\infty,s(\vec d)+H(Q)]$.
\end{enumerate}
\end{theorem}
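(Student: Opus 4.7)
\medskip
\noindent\textbf{Proof plan.} The strategy rests on a single \emph{modification principle}: for any two adapted sequences $(\vec m_n,\vec u_n)$ and $(\vec m'_n,\vec u'_n)$ associated with the same pair $(\vec d,Q)$ and any $\varepsilon>0$, there exist $\eta_n=o(1)$ and $\kappa_n=o(n)$ such that
\[
\log\bigl|\sG_{\vec m_n,\vec u_n}(\mu,\varepsilon)\bigr|-\pnorm{1}{\vec m_n}\log n\;\leq\;\log\bigl|\sG_{\vec m'_n,\vec u'_n}(\mu,\varepsilon+\eta_n)\bigr|-\pnorm{1}{\vec m'_n}\log n+\kappa_n.
\]
Granted this, symmetry in the two sequences yields part~(1) after dividing by $n$ and letting $\varepsilon\downarrow 0$. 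For part~(2), the bound $\Sigma_{\vec d,Q}(\mu)\leq s(\vec d)+H(Q)$ is immediate from~\eqref{eq:sizeGmu} combined with the inclusion $\sG_{\vec m_n,\vec u_n}(\mu,\varepsilon)\subseteq \sG_{\vec m_n,\vec u_n}$, while the equality $\overline\Sigma=\underline\Sigma$ follows by applying the same principle across distinct sample sizes $n<n'$: embed a configuration on $n$ vertices into one on $n'$ vertices by adjoining $n'-n$ decorated isolated vertices and then use local surgeries to reach $(\vec m_{n'},\vec u_{n'})$, producing a sub-multiplicative comparison that forces the $\limsup$ and $\liminf$ in~\eqref{eq:def_entropy} to coincide.

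\medskip
\noindent\textbf{The modification map.} Given $G\in \sG_{\vec m_n,\vec u_n}(\mu,\varepsilon)$, I would build images in $\sG_{\vec m'_n,\vec u'_n}$ by two kinds of local surgeries. First, since $\pnorm{1}{\vec u_n-\vec u'_n}=o(n)$ by adaptedness, I relabel the marks of $o(n)$ vertices to correct the vertex-mark vector. Second, since $\pnorm{1}{\vec m_n-\vec m'_n}=o(n)$, I perform $o(n)$ edge-swap/re-marking moves of the form $\{a,b\},\{c,d\}\mapsto\{a,c\},\{b,d\}$, driving the edge-mark vector to $\vec m'_n$ while preserving every vertex's total degree. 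Each such move affects the $h$-neighborhoods of only $O(1)$ vertices, so with a uniform $h$-neighborhood size bound $C_h$ (guaranteed by $\sup_{n,i}\ell_{n,i}<\infty$) the total set of affected vertices has size at most $o(n)\cdot C_h=o(n)$ and hence $\d_{LP}(U(G),U(G'))=o(1)$ uniformly in $h$. Counting the number of preimages of a fixed $G'$ gives at most $n^{O(\pnorm{1}{\vec m_n-\vec m'_n})}$, and the resulting excess factor $e^{O(\pnorm{1}{\vec m_n-\vec m'_n})\log n}$ is precisely absorbed by the difference $\bigl(\pnorm{1}{\vec m_n}-\pnorm{1}{\vec m'_n}\bigr)\log n$ in the normalization of~\eqref{eq:def_entropy}, leaving a $o(n)$ remainder $\kappa_n$.

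\medskip
\noindent\textbf{Main obstacle.} Two points are delicate. First, the edge swaps must avoid creating self-loops or multi-edges, which a greedy/probabilistic argument handles: with $o(n)$ swaps and $\Theta(n)$ available targets at each step, the density of ``bad'' choices is $o(1)$ and a valid swap can always be completed. Second, and more substantively, the L\'evy--Prokhorov perturbation must be controlled \emph{uniformly} in the depth $h$ at which neighborhoods are truncated; without the uniform degree bound, a single surgery could propagate to an unbounded number of rooted neighborhoods, breaking the argument entirely. A final bookkeeping issue is to schedule the surgeries so that they do not undo one another: tracking the $\ell^1$-discrepancy between the current and target mark count vectors and requiring each move to strictly decrease it ensures termination in the claimed $o(n)$ number of steps.
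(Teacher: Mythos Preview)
This theorem is not proved in the present paper at all: it is quoted verbatim from Delgosha--Anantharam~\cite{da} (it is labelled ``\cite{da}, Theorem~2'') and used as a black box. There is therefore no ``paper's own proof'' to compare your attempt against; the authors simply invoke the result.

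That said, your sketch has a genuine gap that would need to be repaired before it could stand on its own. The theorem concerns the DA model $\sG_{\vec m_n,\vec u_n}$, in which only the vertex- and edge-mark count vectors are prescribed; there is \emph{no} constraint on the degree sequence. The hypothesis $\sup_{n,i}\ell_{n,i}<\infty$ you invoke belongs to the paper's \emph{other} model (the uniform graph with given degree sequence) and is simply unavailable here: a graph $G\in\sG_{\vec m_n,\vec u_n}$ can have vertices of degree as large as $n-1$. Consequently your claim that a single surgery affects the $h$-neighbourhoods of only $O(1)$ vertices, via a uniform bound $C_h$ on $h$-ball sizes, is false in this setting, and the control $\d_{LP}(U(G),U(G'))=o(1)$ does not follow from your argument. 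The actual proof in~\cite{da} (building on~\cite{bc}) handles this by working with truncated neighbourhood statistics and the colored configuration model rather than by direct surgery on arbitrary graphs.

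Two secondary issues: (i) since $\pnorm{1}{\vec m_n}$ and $\pnorm{1}{\vec m'_n}$ may differ (by $o(n)$), degree-preserving swaps plus re-markings cannot by themselves change the total edge count, so you also need edge insertions/deletions, which further undermines the ``degree preserved, bounded $h$-balls'' mechanism; (ii) the sub-multiplicativity sketch for $\overline\Sigma=\underline\Sigma$ is far from a proof---the decorated-isolated-vertex embedding does not obviously interact well with the $\pnorm{1}{\vec m_n}\log n$ normalization, and in~\cite{bc,da} this equality is obtained by an entirely different route (computing both quantities explicitly via the colored unimodular Galton--Watson tree).
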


The quantity $\Sigma_{\vec d,Q}(\mu)$ is called the BC entropy of $\mu$ associated with the pair $(\vec d,Q)$. From \cite[Theorem~1]{da}, one concludes that, unless $\vec d = \vec \deg(\mu)$, $Q =\vec{\Pi}(\mu)$, and $\mu$ is a unimodular measure on $\bar\sT_*$, $\Sigma_{\vec d,Q} (\mu) = -\infty$.

Combining the theorem above, Equation~\eqref{eq:sizeGmu} and~\cite[Theorem 4.1.11]{dembo_zeitouni}, one readily concludes a weak large deviation principle for the collection $\big(U(G_{n})\big)_{n \in \N}$, with rate function given by
\begin{equation}\label{eq:rate_function_uniform_marked_graph_0}
I_{\vec{d},Q}(\mu)= H(Q)+s(\vec{d})-\Sigma_{\vec d,Q} (\mu).
\end{equation}
This result was further strengthened in~\cite[Theorem~2.8]{BOPR} to a complete large deviation principle.

\section{Main result and proof}\label{sec:proof_t_marked_configuration_model}

In this section, we give a precise description of the rate function for the large deviation satisfied by the sequence of empirical neighborhood distributions defined in~\eqref{eq:empirical_neighborhood_distribution}.

Throughout the paper we assume that the degree sequence $\vec{\ell}_n$ satisfies
\begin{equation}\label{eq:uniformly_bounded}
    \sup_{n\geq 1} \sup_{1\leq i \leq n} \ell_{n,i} < \infty\,,
\end{equation} 
and that there exists a probability measure $P$ on $\bb{Z}_+$ such that the following convergence holds in the weak sense
\begin{equation}\label{eq:b}
P_{n} = \frac{1}{n}\sum_{i=1}^{n} \delta_{\ell_{n,i}} \to P. 
\end{equation}

Let $G_n$ be a random marked graph uniformly sampled from $\mathcal{G}(\vec{\ell}_n)$ with i.i.d.\ vertex marks sampled according to $\vartheta \in \mathcal{P}(\Theta)$ and i.i.d.\ edge marks with distribution $\chi \in \mathcal{P}(\Xi)$, that is, for $\bar{G} = (G,\vec{\theta}, \vec{\xi}) \in \bar{\mathcal{G}}(\vec{\ell}_n)_{\vec{m}_n, \vec{u}_n}$, let
\begin{align}\label{def:model}
    \P(G_n = \bar{G}) =  \frac{1}{|\mathcal{G}(\vec{\ell}_n)|}\cdot \P( \vec{O} = \vec{\theta}) \cdot  \P ( \vec{X} = \vec{\xi}),
\end{align}
where the coordinates of $\Vec{X}$ are i.i.d.\ with distribution $\chi$ and the coordinates of $\vec{O}$ are i.i.d.\ with distribution $\vartheta$.

Define now 
\begin{equation} \label{eq:J_1}
J_1(P) = \lim_{n\to\infty}\frac{1}{n}\big(\log |\mathcal{G}(\vec{\ell}_n)| - \big\|\vec m_n \big\|_{1}\log n\big),
\end{equation}
the quantity that controls the asymptotic growth of $|\mathcal{G}(\vec{\ell}_n)|$.

\begin{remark}
We point out that~\cite[Equation~45]{da} establishes the equality
\begin{equation*}
J_1(P) = \Sigma({\rm UGWT}_1(P)),    
\end{equation*}
where ${\rm UGWT}_1(P)$ denotes the distribution of the unimodular Galton-Watson tree with offspring distribution $P$ and unitary spaces of marks, defined in~\cite{da}.
\end{remark}

Finally, set
\begin{equation}\label{eq:ratefunction_l:6.1}
I_{P,\vec d,Q}(\rho) = 
\begin{cases}
J_1(P)+I_{\vec{d},Q}(\rho), & \text{ if } \pi_g(\rho_1) = P, \\
+\infty, & \text {otherwise},
\end{cases}
\end{equation}
with $I_{\vec{d},Q}(\rho)$ given in~\eqref{eq:rate_function_uniform_marked_graph_0}.

We are now in position to precisely state the content of~\eqref{eq:empirical_neighborhood_distribution}. Recall the definition of $\chi^{2}_{\leq}$ in the paragraph above Equation~\eqref{eq:xi_2_leq}.
\begin{theorem}\label{theorem:main}
    Assume that the degree sequence $(\vec{\ell}_{n})_{n \in \N}$ satisfies~\eqref{eq:uniformly_bounded} and~\eqref{eq:b}. Let $G_n$ be the random marked graph with distribution given by~\eqref{def:model} and set $d  = \sum_{\ell=0}^{\infty} \ell P(\ell)$. 
    Then the sequence $(U(G_n))_{n\geq 1}$ satisfies a large deviation principle with with rate function
    \begin{equation} \label{eq:ratefunction_main}
\lambda_{P}(\mu) =I_{P, \vec\deg(\mu), \vec{\Pi}(\mu)}(\mu) + \frac{d}{2} H\Big( \frac{1}{d}\vec\deg(\mu)_{\leq} \Big| \chi_\leq^{2} \Big) + H( \vec{\Pi}(\mu) |\vartheta ),
\end{equation}
if $\deg(\mu)=d$ and $\lambda_{P}(\mu) = +\infty$ otherwise.
\end{theorem}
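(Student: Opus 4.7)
Following the two-step strategy summarized in the Overview of the Proof, I would first perform a mixture decomposition. Since the vertex and edge marks are drawn i.i.d.\ from $\vartheta$ and $\chi$ independently of the underlying graph, the joint law $\P(G_n = \bar G)$ depends on the marks only through the count vectors $(\vec u_{\bar G}, \vec m_{\bar G})$, and conditioning on these counts yields the uniform distribution on $\mathcal{G}(\vec{\ell}_n)_{\vec m, \vec u}$. Hence
\begin{equation*}
\P\bigl(U(G_n) \in \cdot \bigr) \;=\; \sum_{\vec u, \vec m} \alpha_n(\vec u)\, \beta_n(\vec m)\, \P^{\vec u,\vec m}\bigl(U(G_n) \in \cdot \bigr),
\end{equation*}
where $\alpha_n$ is the $\mathrm{Multinomial}(n;\vartheta)$ law of $\vec u$, $\beta_n$ is the $\mathrm{Multinomial}(m_n;\chi^2_\leq)$ law of $\vec m$ (since each undirected edge carries an unordered pair of marks drawn from $\chi^2_\leq$ independently across edges), and $\P^{\vec u,\vec m}$ is the uniform measure on $\mathcal{G}(\vec{\ell}_n)_{\vec m,\vec u}$.

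\textbf{LDP for the conditional model.} The core technical step is to establish, whenever $(\vec u_n,\vec m_n)$ is adapted to some pair $(\vec d,Q)$ in the sense of Definition~\ref{def:adapted}, an LDP under $\P^{\vec u_n,\vec m_n}$ at speed $n$ with rate function $I_{P,\vec d,Q}$ from~\eqref{eq:ratefunction_l:6.1}. Following~\cite{da}, this reduces to sharp asymptotics for the cardinality in~\eqref{eq:a}, which I would combine with the counting formula~\eqref{eq:sizeGmu} and the asymptotic~\eqref{eq:J_1} suitably refined to encode the degree-sequence constraint. The $\|\vec m_n\|_1 \log n$ contribution cancels because every graph in $\mathcal{G}(\vec{\ell}_n)$ has exactly $m_n = \|\vec m_n\|_1$ edges, and the restriction $\pi_g(\rho_1) = P$ enters automatically since $U(G_n)$ projects onto the empirical degree distribution $P_n \to P$ via the weak contraction $\pi_g$. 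The upper bound can be handled by a degree-constrained version of the encoding argument of~\cite{da}. The lower bound is the main obstacle: one constructs an explicit sequence $\Gamma_n \in \mathcal{G}(\vec{\ell}_n)_{\vec m_n,\vec u_n}$ with $U(\Gamma_n)_k \to \rho_k$ through a generalized configuration model on colored degree sequences, combined with the combinatorial mass-transport manipulations on matrices of colored degrees encoded in Lemma~\ref{le:matrix}, and concluded via the Reconstruction Lemma~\cite[Proposition~8]{da}.

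\textbf{Assembly via a mixture of LDPs.} By Sanov's theorem, $\alpha_n$ satisfies an LDP at speed $n$ with rate $H(Q \,|\, \vartheta)$ evaluated at $Q = \vec\Pi(\mu)$. Since $m_n/n \to d/2$, $\beta_n$ satisfies an LDP at speed $n$ with rate $(d/2) H\bigl(\tfrac{1}{d} \vec\deg(\mu)_\leq \,\big|\, \chi^2_\leq\bigr)$. A standard mixture-of-LDPs result (in the spirit of~\cite[Section~3]{BOPR}) then combines these with the conditional LDP into a single LDP for $U(G_n)$, whose rate is the infimum over all admissible $(\vec d,Q)$ of the sum of the three contributions. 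Since $I_{\vec d,Q}(\mu) = +\infty$ unless $\vec d = \vec\deg(\mu)$ and $Q = \vec\Pi(\mu)$ (Theorem~\ref{t:equality_entropy} together with the remark following it), the infimum is attained uniquely at this pair, producing precisely~\eqref{eq:ratefunction_main}; the condition $\deg(\mu) = d$ is automatic because $\vec\deg(\mu)$ must arise as a limit $\vec m_n / n$ of an adapted sequence.

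\textbf{Main obstacle.} The critical difficulty is the lower bound within the conditional LDP: producing marked graphs in $\mathcal{G}(\vec{\ell}_n)_{\vec m_n,\vec u_n}$ whose empirical neighborhood distribution approximates $\rho$ despite the rigidity imposed by $\vec{\ell}_n$. This rigidity prevents a direct application of the colored configuration model of~\cite{da}, and Lemma~\ref{le:matrix} is designed precisely to reconcile arbitrary colored-degree matrices with the prescribed $\vec{\ell}_n$ while altering only a vanishing fraction of vertices' local neighborhoods. The remaining components are either direct adaptations of~\cite{da,BOPR} or classical Sanov-type computations.
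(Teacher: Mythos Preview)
Your proposal is correct and follows essentially the same approach as the paper: the mixture decomposition is the paper's Lemma~\ref{le:themix}, the conditional LDP is exactly Theorem~\ref{ldp:uniform_given_neihborhood} (with the lower bound handled via Lemma~\ref{lemma:modification} and Lemma~\ref{le:matrix} as you indicate), and the assembly is carried out in the paper via Sanov's theorem together with Biggins' mixture-of-LDPs result~\cite[Theorem~1]{biggins}. One minor simplification in the paper worth noting: for the upper bound in the conditional LDP the paper does not redo a degree-constrained encoding but simply drops the constraint $U(\pi_g(\bar G))_1 = P_n$ and invokes~\cite[Equation~(69)]{da} directly.
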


\begin{remark}
In the theorem above we denote by $\vec{\deg}(\mu)_{\leq}$ for the vector given by $\deg^{x,x'}(\mu)_{\leq} = \deg^{x,x'}(\mu)+\deg^{x',x}(\mu)$, for $x < x'$, and $\deg^{x,x}(\mu)_{\leq} = \deg^{x,x}(\mu)$. Notice that, in the case when $\deg(\mu) = d$, $\frac{1}{d}\vec{\deg}(\mu)_{\leq}$ defines a probability measure on $\Xi^{2}_{\leq}$.
\end{remark}

The proof of the result above is split into three main parts. Section~\ref{sec:degreesequence} contains a large deviation result for an intermediate model that is used in combination with mixture tools introduced in Section~\ref{sec:mixture}, where the proof of the theorem is completed. Section~\ref{subsec:proof_lemma_modification} contains a proof of a technical lemma used in the proof.

\bigskip

\subsection{Uniform marked graphs with given degree sequence.}\label{sec:degreesequence}
~

The first step towards Theorem~\ref{theorem:main} is to consider the intermediate model from which we construct our mixture representation.

We consider the random graph with given degree sequences and vectors of edge- and vertex-marks. In order to introduce the model precisely, recall that $\mathcal{G}(\vec{\ell}_n)$ denotes the set of graphs with $n$ vertices and such that $\deg_{G}(i) = \ell_{n,i}$.
Notice that the number of edges in each graph in $\mathcal{G}(\vec{\ell}_n)$ is exactly $\frac{1}{2} \sum_{i=1}^{n}\ell_{n,i}$. Let furthermore $(\vec{u}_n, \vec{m}_n)$ be sequences of vertex- and edge-marks vectors in a way that $\big\|\vec m_n \big\|_{1} = \frac{1}{2} \sum_{i=1}^{n}\ell_{n,i}$. Finally, define the set of marked graphs
\begin{equation}
\bar{\mathcal{G}}(\vec{\ell}_n)_{ \vec{m}_n,\vec{u}_n}=\{\bar{G}=(G,\vec{\tau},\vec{\xi}) \in \bar{\mathcal{G}}_{n} : G \in \mathcal{G}(\vec{\ell}_n), \vec{u}_{\bar{G}}=\vec{u}_n, \vec{m}_{\bar{G}}=\vec{m}_n\}.
\end{equation}
We consider the random graph $\bar{G}_{n}$ uniformly sampled from $\bar{\mathcal{G}}(\vec{\ell}_n)_{ \vec{m}_n,\vec{u}_n}$. Recall the definition of $P_{n} = \frac{1}{n}\sum_{i=1}^{n} \delta_{\ell_{n,i}}$ from ~\eqref{eq:b}. 

\begin{theorem} \label{ldp:uniform_given_neihborhood}
Let $\bar{G}_n$ be uniformly sampled from $\bar{\mathcal{G}}(\vec{\ell}_n)_{ \vec{m}_n,\vec{u}_n}$ and assume that $P_{n} \rightarrow P$, where $P \in \mathcal{P}(\Z_{+})$ is a probability measure with finite support. If $(\vec{m}_n, \vec{u}_n)$ is adapted to $(\vec{d}, Q)$, then the sequence $\big( U(\bar{G}_n) \big)_{n \in \N}$ satisfies a large deviation principle with rate function $I_{P,\vec d,Q}$ given by~\eqref{eq:ratefunction_l:6.1}.
\end{theorem}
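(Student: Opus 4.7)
The plan is to adapt the strategy of \cite{da} to incorporate the hard degree-sequence constraint. By \cite[Theorem 4.1.11]{dembo_zeitouni}, a full LDP follows from establishing, for every $\mu \in \mathcal{P}(\bar{\sG}_*)$,
\[
\lim_{\varepsilon \downarrow 0}\lim_{n\to\infty}\frac{1}{n}\log \P\bigl(U(\bar G_n) \in B(\mu,\varepsilon)\bigr) = -I_{P,\vec{d},Q}(\mu),
\]
together with exponential tightness of $U(\bar{G}_n)$, the latter following as in \cite[Theorem 2.8]{BOPR}. Writing the probability on the left as the ratio of cardinalities $|\bar{\mathcal{G}}(\vec{\ell}_n)_{\vec{m}_n,\vec{u}_n}(\mu,\varepsilon)|/|\bar{\mathcal{G}}(\vec{\ell}_n)_{\vec{m}_n,\vec{u}_n}|$ splits the task into two asymptotic counts.

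For the denominator, each graph $G\in \mathcal{G}(\vec{\ell}_n)$ admits the same number of mark labellings compatible with $(\vec{m}_n,\vec{u}_n)$, a product of multinomial coefficients depending only on $n$, $\vec m_n$ and $\vec u_n$. Combining \eqref{eq:J_1} with Stirling's formula and the adaptedness hypotheses of Definition~\ref{def:adapted}, I expect
\[
\log|\bar{\mathcal{G}}(\vec{\ell}_n)_{\vec{m}_n,\vec{u}_n}| = m_n\log n + n\bigl[J_1(P)+H(Q)+s(\vec d)\bigr] + o(n).
\]
For the numerator, the LDP upper bound is cheap: the inclusion $\bar{\mathcal{G}}(\vec{\ell}_n)_{\vec{m}_n,\vec u_n}(\mu,\varepsilon) \subseteq \bar{\mathcal{G}}_{\vec m_n,\vec u_n}(\mu,\varepsilon)$ together with Theorem~\ref{t:equality_entropy} gives
\[
\limsup_{n} \frac{1}{n}\bigl[\log |\bar{\mathcal{G}}(\vec{\ell}_n)_{\vec m_n,\vec u_n}(\mu,\varepsilon)| - m_n\log n\bigr] \le \overline\Sigma_{\vec d,Q}(\mu,\varepsilon),
\]
whose limit as $\varepsilon \downarrow 0$ is $\Sigma_{\vec{d},Q}(\mu)$. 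Combined with the denominator estimate, this yields $\limsup (1/n) \log \P \le -I_{P,\vec{d},Q}(\mu)$ provided $\pi_g(\mu_1) = P$; if this last condition fails, the ball $B(\mu,\varepsilon)$ is eventually incompatible with the fixed degree profile $\vec{\ell}_n$ and the probability vanishes outright.

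The substantive work is the matching lower bound on $|\bar{\mathcal{G}}(\vec{\ell}_n)_{\vec m_n,\vec u_n}(\mu,\varepsilon)|$, for which I plan the explicit construction sketched in the introduction. Assuming $\mu$ is unimodular on $\bar{\sT}_*$ with $\vec{\deg}(\mu)=\vec{d}$, $\vec{\Pi}(\mu)=Q$ and $\pi_g(\mu_1)=P$ (otherwise the rate function is $+\infty$ and there is nothing to prove), I will exhibit a sequence $\Gamma_n \in \bar{\mathcal{G}}(\vec{\ell}_n)_{\vec m_n,\vec u_n}$ with $U(\Gamma_n)_k \to \mu_k$ for every truncation depth $k$. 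The construction proceeds in three stages: (i) use the Reconstruction Lemma \cite[Proposition 8]{da} to produce, from a well-chosen matrix of colored degrees, a marked graph whose $k$-truncated neighborhood distribution approximates $\mu_k$; (ii) invoke Lemma~\ref{le:matrix} to modify that colored-degree matrix so each row sum equals the target $\ell_{n,i}$ while touching only a sublinear fraction of rows and preserving the column marginals; (iii) run a generalized configuration model on the corrected matrix to obtain a valid element of $\bar{\mathcal{G}}(\vec{\ell}_n)_{\vec m_n,\vec u_n}$, so that the sublinear modification yields only an $o(n)$ correction to $U(\Gamma_n)_k$. Counting local rewirings of $\Gamma_n$ that stay inside $\bar{\mathcal{G}}(\vec{\ell}_n)_{\vec m_n,\vec u_n}(\mu,\varepsilon)$, in the spirit of the lower-bound step of \cite{da}, then produces $\exp(m_n\log n + n \underline\Sigma_{\vec d, Q}(\mu,\varepsilon) + o(n))$ such graphs.

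The main obstacle is Step~(ii): the colored-degree matrix must simultaneously have prescribed row sums $\vec{\ell}_n$, column sums dictated by $(\vec m_n,\vec u_n)$, and produce a graph with local neighborhoods close to $\mu$. Lemma~\ref{le:matrix} is tailored precisely to this transport problem, permitting mass to be redistributed among rows of the colored-degree matrix while preserving its column marginals. Verifying that the modified configuration model still concentrates locally near $\mu$, with subexponential cost in the counting, is the technical heart of the argument.
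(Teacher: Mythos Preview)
Your proposal is correct and follows essentially the same route as the paper: exponential tightness from the uniform degree bound, the denominator asymptotics via~\eqref{eq:J_1} and Stirling, the upper bound by the trivial inclusion into $\bar{\mathcal{G}}_{\vec m_n,\vec u_n}(\mu,\varepsilon)$ together with the case split on $\pi_g(\mu_1)=P$, and the lower bound by constructing a template sequence $\widetilde{\Gamma}_n$ with $U(\widetilde\Gamma_n)_k\to\mu_k$ and exact empirical degree profile $P_n$, then counting graphs sharing its $k$-truncated neighborhood distribution. Your steps (i)--(iii) are precisely the content of the paper's Lemma~\ref{lemma:modification}, and your step~(iv) is the $N_k(\widetilde\Gamma_n)$ count from \cite[Equation~45]{da} followed by $\lim_k J_k(\rho_k)=\Sigma(\rho)$. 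One small imprecision: Lemma~\ref{le:matrix} does not ``preserve the column marginals'' (per-color totals) of the colored-degree matrix; it only guarantees membership in $\mathcal{D}_n$ (symmetry and parity), which is all the configuration model needs, and the $o(n)$ modification of entries is what keeps $U(\widetilde\Gamma_n)_k$ close to $\mu_k$.
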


Before presenting the proof of the theorem above, we state a preliminary result. This next lemma allows for the modification of the finite neighborhoods of the graphs in a way that the empirical measures of the degrees still converge to $P$ but the analogous limit for a higher depth $k>h$ equals another target distribution. Recall $U( \bar{G} )_k$ denotes the depth $k$ empirical neighborhood distribution of a (marked) graph $\bar{G}$.

\begin{lemma}\label{lemma:modification}
Given an integer $k > 1$ and $\rho \in \mathcal{P} (\bar{\mathcal{T}}_*)$ supported on trees with degrees uniformly bounded by $L>0$ and $\rho_{1} \circ \pi_{g}^{-1} = P$, there exists a sequence of marked graphs $\big( \widetilde{\Gamma}_n \big)_{n \in \N}$ such that $U(\widetilde{\Gamma}_n)_k \rightarrow \rho_k$ and $U(\pi_{g}(\widetilde{\Gamma}_n))_1 = P_{n}$, for all $n \in \N$.
\end{lemma}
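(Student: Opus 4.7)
The plan is to construct $\widetilde{\Gamma}_n$ in two stages: first produce a sequence of marked graphs realising the correct depth-$k$ local statistics, then surgically repair the degree sequence so it coincides with $\vec{\ell}_n$ while disturbing only $o(n)$ neighbourhoods. Since $\rho$ is unimodular, supported on trees with degrees bounded by $L$, and $\rho_1 \circ \pi_g^{-1} = P$, the truncation $\rho_k$ is a probability measure on $\bar{\mathcal{G}}^{k}_{*}$ and admits a natural description through a matrix of coloured (rooted) depth-$k$ neighbourhood types. The first stage applies the Reconstruction Lemma together with the generalised configuration model (in the spirit of~\cite[Proposition~8]{da}) to these coloured degrees: we sample, for each $n$, a matrix of coloured degrees whose empirical distribution over vertices approximates the marginal law of $\rho_k$ at the root, then pair half-edges in a configuration-model manner that is compatible with the marks. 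Standard arguments yield a sequence $\Gamma_n$ of marked graphs such that $U(\Gamma_n)_k \to \rho_k$.

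In the second stage we correct the degree sequence. By construction the empirical degree distribution of $\Gamma_n$ converges to $\rho_1 \circ \pi_g^{-1} = P$, while the target sequence $\vec{\ell}_n$ has empirical distribution $P_n \to P$ on the same finite support $\{0,1,\dots,L\}$. Consequently, when we count vertices by degree, the number of discrepancies
\begin{equation*}
\Delta_n = \sum_{\ell=0}^{L} \bigl| \#\{i : \deg_{\Gamma_n}(i) = \ell\} - \#\{i : \ell_{n,i} = \ell\} \bigr|
\end{equation*}
is $o(n)$. We then invoke the mass-transport mechanism on matrices of coloured degrees (the content of Lemma~\ref{le:matrix}): pair up each excess-degree vertex with a deficit-degree vertex and perform a bounded-range surgery (an edge-rewiring plus a relabelling of marks) that lowers one degree and raises the other while preserving the edge- and vertex-mark count vectors and the global constraint $\sum_i \ell_{n,i} = 2 |E|$. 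Each such surgery alters the depth-$k$ neighbourhoods of only $O(L^{k})$ vertices, independently of $n$. Performing $\Delta_n/2 = o(n)$ of these surgeries yields $\widetilde{\Gamma}_n$ whose degree sequence equals $\vec{\ell}_n$ (possibly after relabelling vertices), so that $U(\pi_g(\widetilde{\Gamma}_n))_1 = P_n$ exactly.

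It remains to verify that $U(\widetilde{\Gamma}_n)_k \to \rho_k$. The total number of vertices whose depth-$k$ neighbourhood was modified is bounded by $O(L^{k}) \cdot \Delta_n = o(n)$, hence
\begin{equation*}
\d_{LP}\bigl( U(\widetilde{\Gamma}_n)_k , U(\Gamma_n)_k \bigr) \leq \frac{O(L^{k})\,\Delta_n}{n} \longrightarrow 0.
\end{equation*}
Since $U(\Gamma_n)_k \to \rho_k$, the claim follows. Note that the one-Lipschitz contraction $\pi_g$ and Lemma~\ref{l:lipschitz} justify passing truncations and graph projections through the Lévy--Prokhorov metric cleanly.

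The main obstacle is the second stage: the surgery must simultaneously (i) adjust a prescribed degree at a chosen vertex, (ii) preserve the prescribed edge-mark and vertex-mark count vectors, and (iii) keep the graph simple. Executing this without cascading constraint violations is exactly what Lemma~\ref{le:matrix} is designed for, so the bulk of the technical work is deferred to that lemma; the proof above consists of orchestrating it together with the Reconstruction Lemma and tracking that the number of rewirings remains $o(n)$ so local convergence at depth $k$ is preserved.
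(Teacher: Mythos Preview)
Your two-stage plan is the paper's plan, and the first stage is exactly right: invoke~\cite[Lemma~6]{da} to produce $\Gamma_n$ with $U(\Gamma_n)_k\to\rho_k$, and observe that since $U(\pi_g(\Gamma_n))_1\to P$ and $P_n\to P$ on a finite support, the degree mismatches are $o(n)$. The second stage, however, is mischaracterised. You describe it as performing edge-rewiring surgery directly on $\Gamma_n$ and you invoke Lemma~\ref{le:matrix} as if it executed such surgery. It does not: Lemma~\ref{le:matrix} only modifies the \emph{matrix of coloured degrees} $D_n\mapsto\widetilde D_n$, changing at most $o(n)$ columns so that $\sum_c\widetilde D_n(c)(i)=\ell_{n,i}$ while keeping $\widetilde D_n\in\mathcal{D}_n$. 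No graph is touched there.

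The step you are missing is the bridge from the modified matrix back to a graph. The paper does \emph{not} rewire $\Gamma_n$; it discards $\Gamma_n$ entirely and rebuilds a new graph from $\widetilde D_n$. Theorem~\ref{thm:alpha-h} (Bordenave--Caputo) guarantees that for $n$ large the set $\mathcal{G}(\widetilde D_n,2k+1)$ is nonempty, so one can pick $H_n$ there and set $\widetilde\Gamma_n=\mathrm{MCB}_{\vec{\tau}_{\Gamma_n}}(H_n)$. The point is then \cite[Corollary~3]{da}: for any $H\in\mathcal{G}(\vec D,2k+1)$, the depth-$k$ neighbourhood $[\mathrm{MCB}_{\vec\tau}(H),v]_k$ is \emph{determined} by $D(v)$ and $\tau(v)$ alone. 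Hence every vertex whose coloured degree was not altered by Lemma~\ref{le:matrix} retains exactly the depth-$k$ neighbourhood it had in $\Gamma_n$, and this is all but $o(n)$ vertices. Your $O(L^k)$ propagation bound is unnecessary (and would be the harder route, since direct rewiring must also keep the graph simple, which you do not address). The concerns you raise about preserving edge- and vertex-mark count vectors are likewise not needed for this lemma; $\widetilde\Gamma_n$ only has to satisfy the two conclusions stated.
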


The proof of the lemma above is postponed to Section~\ref{subsec:proof_lemma_modification}. We now prove Theorem~\ref{ldp:uniform_given_neihborhood}.

\begin{proof}[Proof of Theorem~\ref{ldp:uniform_given_neihborhood}]
In order to establish the theorem, it suffices to verify that the sequence $\big( U(\bar{G}_{n}) \big)_{n \in \N}$ is exponentially tight and that, for every $\rho \in \mathcal{P}(\bar{\mathcal{G}}_{*})$,
\begin{equation}\label{eq:prob_bola_2}
\lim_{\delta \to 0} \lim_{n \to \infty} \frac{1}{n} \log \mathbb{P}(U(\bar{G}_n)\in B(\rho,\delta)) = -I_{P,\vec d,Q}(\rho),
\end{equation}
where $B(\rho, \delta)$ denotes the ball of radius $\delta$ around $\rho$ in the L\'{e}vy-Prokhorov metric and $I_{P,\vec d,Q}$ is given by~\eqref{eq:ratefunction_l:6.1}.

In order to establish exponential tightness for the sequence, let $L$ be the maximum degree of the sequence $(\vec{\ell}_n)_{n \in \N}$, which is finite by assumption. Observe that the maximum degree of the graphs in the sequence $\big( \bar{G}_{n} \big)_{n \in \N}$ is also uniformly bounded by $L$. This immediately implies that $\big( U(\bar{G}_{n}) \big)_{n \in \N}$ is contained in the set of probability measures supported on the set $\mathcal{K}_{L}$, the set of marked graphs with maximum degree $L$. Exponential tightness follows from the fact that $\mathcal{K}_{L}$ is a compact set, which in turn follows from \cite[Lemma~ B.1]{BOPR} and \cite[Lemma~2.1]{bc}.

It remains to verify~\eqref{eq:prob_bola_2}. This will be done two steps, by first checking the upper bound and then the lower bound.   Since $\bar{G}_{n}$ is sampled uniformly in $\bar{\mathcal{G}}(\vec{\ell}_n)_{ \vec{m}_n,\vec{u}_n}$, we have
\begin{equation}\label{eq:expressionP2}
\mathbb{P}(U(\bar{G}_n)\in B(\rho,\delta)) = \frac{|\{\bar{G}\in \bar{\mathcal{G}}_{\vec{m}_n,\vec{u}_n} : U(\pi_{g}(\bar{G}))_1 = P_{n}, U(\bar{G})\in B(\rho,\delta)\}|}{|\bar{\mathcal{G}}(\vec{\ell}_n)_{ \vec{m}_n,\vec{u}_n}|}.
\end{equation}

We start with the upper bound, by first analysing the denominator in the fraction above. Recall from~\cite[Equation 9]{da} that
\begin{equation}\label{eqDenominator}
|\bar{\mathcal{G}}(\vec{\ell}_n)_{ \vec{m}_n,\vec{u}_n}|=|\mathcal{G}(\vec{\ell}_n)||T(\vec{u}_n/n)||T(\vec{m}_n/m_{n})|\times 2^{\sum_{x<x'}m_n(x,x')},
\end{equation}
where $T(\nu)$ denotes the type class of the measure $\nu$ \begin{equation*}
T_n(\nu) = \{ \vec{\theta} \in \Theta^n: L_n(\vec{\theta}) = \nu \}.
\end{equation*}
and analogously for the space $\Xi^{m_{n}}$. Combining the above with the definition of $J_1(P)$ in~\eqref{eq:J_1} yields
\begin{equation}\label{eqDenominator2}
\log |\bar{\mathcal{G}}(\vec{\ell}_n)_{ \vec{m}_n,\vec{u}_n}|=nJ_1(P)+m_n\log n+nH(Q)+ns(\vec{d})+o(n),
\end{equation}
see~\cite[Appendix~G]{da}.

We now work on the numerator of~\eqref{eq:expressionP2}. Consider two cases, depending on whether $\rho_{1} \circ \pi_{g}^{-1} = P$ or not.
 
Assume first that $\rho_{1} \circ \pi_{g}^{-1} \neq P$ and set $\gamma = \d_{LP}(P, \rho_{1} \circ \pi_{g}^{-1}) >0$. Let us check that the numerator in~\eqref{eq:expressionP2} is zero if $\delta$ is small enough, which already implies the equality~\eqref{eq:prob_bola_2} with $I(\rho) = \infty$. Let $\delta < \gamma/2$ and notice that, if $U(\bar{G}) \in B(\rho, \delta)$, one can apply Lemma~\ref{l:lipschitz} with the maps $\rho \mapsto \rho_{1}$ and $\rho \mapsto \rho \circ \pi_{g}^{-1}$ to obtain
\begin{equation}\label{eq:numerator1}
    \gamma/2 \geq \d_{LP}(U(\bar{G}), \rho) \geq  \d_{LP}(U(\bar{G})_1, \rho_1) \geq \d_{LP} ( P_{n} , \rho_1 \circ \pi_{g}^{-1} ).
\end{equation}
On the other hand, triangular inequality implies
\begin{equation*}
    \d_{LP}( P_{n} , \rho_1 \circ \pi_{g}^{-1} ) \geq \d_{TV}(P, \rho_1 \circ \pi_{g}^{-1}) - \d_{LP}(P, P_{n}) = \gamma - \d_{LP}(P_{n}, P).
\end{equation*}
Since $P_{n} \rightarrow P$, for $n$ sufficiently large we have $\d_{LP}(P_{n}, P) < \tfrac{\gamma}{2}$, and thus
\begin{equation*}
    \d_{LP}(P_{n}, \rho_1 \circ \pi_{g}^{-1}) > \frac{\gamma}{2},
\end{equation*}
contradicting~\eqref{eq:numerator1}. Therefore, for $n$ sufficiently large, the set in  numerator of $\eqref{eq:expressionP2}$ is empty, and the probability in~\eqref{eq:prob_bola_2} equals zero.

Let us now work on the case when $\rho_{1} \circ \pi_{g}^{-1} = P$. Fixed $k > 1$, we use Lemma~\ref{lemma:modification} to construct a sequence of marked graphs $\big( \widetilde{\Gamma}_n \big)_{n \in \N}$ such that $U(\widetilde{\Gamma}_n)_k \Rightarrow \rho_k$ and $U(\pi_{g}(\widetilde{\Gamma}_n))_1 = P_{n}$, for all $n \in \N$. We define 
\begin{equation}
    N_k(\widetilde{\Gamma}_n) = |\{ \bar{G} \in \bar{\mathcal{G}}_{\vec{m}_n,\vec{u}_n} : U(\bar{G})_k = U(\widetilde{\Gamma}_n)_k\}|\,.
\end{equation}
Furthermore, the limit below exists and we use it as the definition of  $J_k(\rho_k)$:
\begin{equation}\label{eq:ldp_proof_eq2}
J_k(\rho_k) = \lim_{n\to \infty}\frac{1}{n}\big(\log N_k(\widetilde{\Gamma}_n)-m_n\log n\big),
\end{equation}
see \cite[Equation~45]{da} for an alternative definition of $J_k(\rho_k)$.

The first step is to recover~\cite[Equation~69]{da}, which states that, for each $k \geq 1$, \begin{equation}\label{eq:69da}
\lim_{\delta\to 0}\limsup_{n\to\infty}\frac{1}{n}\big(\log |\{\bar{G}\in \bar{\mathcal{G}}_{\vec{m}_n, \vec{u}_n} : \d_{LP}(U(\bar{G})_k,\rho_k)\leq \delta \}|-m_n\log n\big)\leq J_k(\rho_k).
\end{equation}
By taking $k$ large enough, we deduce via Lemma~\ref{l:lipschitz}
\begin{equation}
\begin{split}
\lim_{\delta \to 0} & \limsup_{n \to \infty}\frac{1}{n}\big(\log |\{\bar{G} \in \bar{\mathcal{G}}_{\vec{m}_n,\vec{u}_n} : U(\pi_{g} (\bar{G}))_1 = P_{n}, U(\bar{G})\in B(\rho,\delta)\}|-m_n\log n\big) \\ & \qquad \leq \lim_{\delta \to 0} \limsup_{n \to \infty}\frac{1}{n}\big(\log |\{\bar{G}\in \bar{\mathcal{G}}_{\vec{m}_n,\vec{u}_n} : {\rm d}_{LP}(U(\bar{G})_k,\rho_k) \leq \delta \}| - m_n\log n\big) \\
& \qquad \leq J_k(\rho_k).
\end{split}
\end{equation}
Since $\lim_{k} J_{k}(\rho_{k}) = \Sigma(\rho)$ (see \cite[Theorem 3]{da}), we now let $k$ grow and combine the equation above with~\eqref{eqDenominator2} to obtain
\begin{equation}
\lim_{\delta \to 0} \limsup_{n \to \infty} \frac{1}{n} \log \mathbb{P}(U(\bar{G}_n)\in B(\rho,\delta)) \leq \Sigma(\rho)-J_{1}(P)-H(Q) - s(\vec{d}).
\end{equation}
This concludes the verification of the upper bound in~\eqref{eq:prob_bola_2}.

It now remains to prove a matching lower bound for~\eqref{eq:prob_bola_2}. Notice that in this case we can assume $I_{P, \vec{d}, Q} (\rho)< \infty$, which implies that $\rho_{1}$ is supported on stars with degree bounded by $L$.

We now bound
\begin{align*}
|\{\bar{G}\in \bar{\mathcal{G}}_{\vec{m}_n,\vec{u}_n} & : U(\pi_{g} (\bar{G}))_1 = P_{n}, U(\bar{G}) \in B(\rho,\delta)\}| \\
& \geq |\{\bar{G}\in \bar{\mathcal{G}}_{\vec{m}_n,\vec{u}_n} : U(\pi_{g} (\bar{G}))_k=U( \pi_{g}(\widetilde{\Gamma}_n))_k, U(\bar{G})\in B(\rho,\delta)\}| \\
& \geq |\{\bar{G}\in \bar{\mathcal{G}}_{\vec{m}_n,\vec{u}_n} : U(\bar{G})_k=U( \widetilde{\Gamma}_n)_k, U(\bar{G})\in B(\rho,\delta)\}|.
\end{align*}

Let $\widetilde{G}_{k,n}$ be a marked graph uniformly sampled from the set $\{ \bar{G} \in \bar{\mathcal{G}}_{\vec{m}_n,\vec{u}_n} : U(\bar{G})_k = U(\widetilde{\Gamma}_n)_k\}$. Dividing and multiplying by $N_k(\widetilde\Gamma_n)$ we obtain 
\begin{equation}\label{eq:ldp_proof_eq1}
|\{\bar{G}\in \bar{\mathcal{G}}_{\vec{m}_n, \vec{u}_n} : U(\bar{G})_k = U(\widetilde{\Gamma}_n)_k, U(\bar{G})\in B(\rho,\delta)\}|=N_k(\widetilde{\Gamma}_n)\mathbb{P}\big(U(\widetilde{G}_{k,n})\in B(\rho,\delta)\big)\,.
\end{equation}
The proof will then be completed by verifying that, for every $\delta>0$ fixed,
\begin{equation}\label{eq:tilted_prob_limit}
\lim_{k \to \infty} \lim_{n \to \infty} \mathbb{P}\big(U(\widetilde{G}_{k,n})\in B(\rho,\delta)\big) = 1,
\end{equation}
since this immediately implies, when combined with~\eqref{eqDenominator2},~\eqref{eq:ldp_proof_eq1},~\eqref{eq:ldp_proof_eq2}, and taking the limit as $k$ grows,
\begin{equation}
\lim_{\delta \to 0} \lim_{n \to \infty} \frac{1}{n} \log \mathbb{P}(U(\bar{G}_n)\in B(\rho,\delta)) \geq \Sigma(\rho)-J_{1}(P)-H(Q) - s(\vec{d}) = -I_{P, \vec{d}, Q} (\rho).
\end{equation}

In order to check~\eqref{eq:tilted_prob_limit}, choose $k \geq 3\delta^{-1}$ and recall that  $U(\widetilde{G}_n)_k=U(\widetilde{\Gamma}_n)_k$. Since $U(\widetilde{\Gamma}_n)_k \rightarrow \rho_k$, $\d_{LP}(U(\widetilde{G}_n)_k,\rho_k) = \d_{LP}(U(\widetilde{\Gamma}_n)_k,\rho_k) \leq \delta/3$, for all $n$ sufficiently large. As consequence,
\begin{equation}
\begin{split}
{\rm d}_{LP}(U(\widetilde{G}_n),\rho)&\leq \d_{LP}(U(\widetilde{G}_n),U(\widetilde{G}_n)_k)+{\rm d}_{LP}(U(\widetilde{G}_n)_k,\rho_k)+{\rm d}_{LP}(\rho_k,\rho)\\
&\leq \d_{LP}(U(\widetilde{G}_n),U(\widetilde{G}_n)_k)+{\rm d}_{LP}(U(\widetilde{\Gamma}_n)_k,\rho_k)+{\rm d}_{LP}(\rho_k,\rho)\\
&\leq \frac{1}{1+k}+\frac{\delta}{3}+\frac{1}{1+k}\\
&<\delta,
\end{split}
\end{equation}
which implies~\eqref{eq:tilted_prob_limit} and concludes the proof.
\end{proof}

\bigskip

\subsection{Proof of Theorem~\ref{theorem:main}.}\label{sec:mixture}
~

In order to conclude the proof of our main theorem, we now construct a mixture structure. In fact, we prove that the empirical distribution of the random graph with given degrees and i.i.d.\ fields of marks can be written as a mixture of graphs sampled uniformly from $\bar{\mathcal{G}}(\vec{\ell}_n)_{\vec{m}_n, \vec{u}_n}$, when the vectors $\vec{m}_n$ and $\vec{u}_n$ are randomly chosen.

Let $L_{m_{n}}(\chi^{2}_{\leq}) = \frac{1}{m_{n}}\sum_{i=1}^{m_{n}}\delta_{X^{(i)}_{\leq}}$ and  $L_{n}(\vartheta) = \frac{1}{n}\sum_{i=1}^{n} \delta_{Y_{i}}$ denote the empirical means of i.i.d.\ variables with respective distributions $\chi^{2}_{\leq}$ and $\vartheta$. 

\begin{lemma}\label{le:themix}
For any $G \in \bar{\mathcal{G}}(\vec{\ell}_n)_{\vec{m}_n,\vec{u}_n}$, it holds
\begin{equation}\label{eq:uufmixuif}
\P(G_n=G) = 
\P \big( \pi_g(G_n)= \pi_g(G) \big) \P \bigg( L_{m_n}(\chi^{2}_{\leq})=\frac{\vec m_{n,\leq}}{m_n} \bigg)  \P \bigg( L_n(\vartheta)=\frac{\vec u_n}{n} \bigg).
\end{equation}
\end{lemma}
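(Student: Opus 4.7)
The plan is to evaluate $\P(G_n = G)$ directly from the model definition~\eqref{def:model} and to show that its natural factorization into a graph factor, a vertex-mark factor, and an edge-mark factor matches, term by term, the three probabilities on the right-hand side of~\eqref{eq:uufmixuif}. First I would expand
\begin{equation*}
\P(G_n = G) = \frac{1}{|\mathcal{G}(\vec\ell_n)|}\,\P(\vec O = \vec\theta)\,\P(\vec X = \vec\xi),
\end{equation*}
using that $\pi_g(G_n)$ is uniform on $\mathcal{G}(\vec\ell_n)$, independently of the i.i.d.\ marks. This identifies the prefactor $1/|\mathcal{G}(\vec\ell_n)|$ with $\P(\pi_g(G_n) = \pi_g(G))$, leaving the task of rewriting the two mark probabilities.

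For the vertex component, independence gives $\P(\vec O = \vec\theta) = \prod_\theta \vartheta(\theta)^{u_n(\theta)}$, which depends on $\vec\theta$ only through the count vector $\vec u_n$. Comparing with the multinomial
\begin{equation*}
\P\Big(L_n(\vartheta) = \tfrac{\vec u_n}{n}\Big) = |T(\vec u_n/n)| \prod_\theta \vartheta(\theta)^{u_n(\theta)},
\end{equation*}
one obtains $\P(\vec O = \vec\theta) = \P(L_n(\vartheta) = \vec u_n/n) / |T(\vec u_n/n)|$, with $|T(\vec u_n/n)| = \binom{n}{\vec u_n}$.

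For the edge component I would group contributions according to the unordered mark pair on each edge, separating the diagonal $x=x'$ case (each contributing $\chi(x)^2$) from the off-diagonal $x<x'$ case (each contributing $\chi(x)\chi(x')$ independently of orientation). Using the explicit form of $\chi^{2}_{\leq}$, namely $\chi^{2}_{\leq}((x,x)) = \chi(x)^2$ and $\chi^{2}_{\leq}((x,x')) = 2\chi(x)\chi(x')$ for $x<x'$, the analogous rearrangement yields
\begin{equation*}
\P(\vec X = \vec\xi) = \frac{\P\bigl(L_{m_n}(\chi^{2}_{\leq}) = \vec m_{n,\leq}/m_n\bigr)}{|T(\vec m_n/m_n)|\cdot 2^{\sum_{x<x'} m_n(x,x')}},
\end{equation*}
where the factor $2^{\sum_{x<x'}m_n(x,x')}$ precisely accounts for the two orientations each mixed-mark edge may take while preserving the empirical measure $\vec m_{n,\leq}/m_n$.

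To close the argument I would substitute both identities into the initial factorization and invoke the size formula~\eqref{eqDenominator} to recognize
\begin{equation*}
|\mathcal{G}(\vec\ell_n)|\cdot|T(\vec u_n/n)|\cdot|T(\vec m_n/m_n)|\cdot 2^{\sum_{x<x'}m_n(x,x')} = |\bar{\mathcal{G}}(\vec\ell_n)_{\vec m_n,\vec u_n}|,
\end{equation*}
so that all combinatorial factors collapse and~\eqref{eq:uufmixuif} follows. The one step requiring care is the bookkeeping in the edge-mark factor: isolating the symmetric diagonal entries, keeping track of the double-counting introduced by the $(\cdot)_{\leq}$ reduction, and matching the resulting correction with the corresponding factor in~\eqref{eqDenominator}. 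Everything else is a direct unfolding of definitions.
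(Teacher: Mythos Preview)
Your computations are correct, but they do not prove the identity as stated, and your final ``collapse'' step hides a genuine discrepancy that you should flag rather than paper over.

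You correctly identify $\P(\pi_g(G_n)=\pi_g(G))=1/|\mathcal G(\vec\ell_n)|$ at the start, and your intermediate formulas
\[
\P(\vec O=\vec\theta)=\frac{\P(L_n(\vartheta)=\vec u_n/n)}{|T(\vec u_n/n)|},\qquad
\P(\vec X=\vec\xi)=\frac{\P(L_{m_n}(\chi^2_\leq)=\vec m_{n,\leq}/m_n)}{|T(\vec m_n/m_n)|\,2^{\sum_{x<x'}m_n(x,x')}}
\]
are both right. Substituting and using~\eqref{eqDenominator} then gives
\[
\P(G_n=G)=\frac{1}{|\bar{\mathcal G}(\vec\ell_n)_{\vec m_n,\vec u_n}|}\,
\P\!\Big(L_{m_n}(\chi^2_\leq)=\tfrac{\vec m_{n,\leq}}{m_n}\Big)\,
\P\!\Big(L_n(\vartheta)=\tfrac{\vec u_n}{n}\Big).
\]
This is exactly the mixture representation the paper needs (and uses immediately afterwards): conditional on the mark count vectors, $G_n$ is uniform on $\bar{\mathcal G}(\vec\ell_n)_{\vec m_n,\vec u_n}$. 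But it is \emph{not} literally~\eqref{eq:uufmixuif}, because $1/|\bar{\mathcal G}(\vec\ell_n)_{\vec m_n,\vec u_n}|\neq 1/|\mathcal G(\vec\ell_n)|=\P(\pi_g(G_n)=\pi_g(G))$ unless the mark spaces are trivial. A quick sanity check confirms this: summing the right-hand side of~\eqref{eq:uufmixuif} over all marked $G$ does not give $1$, whereas summing your displayed identity does.

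So the situation is: your argument is the right one (and is the argument of \cite[Lemma~5.1]{BOPR} to which the paper defers), but the lemma as printed carries a typo in the first factor. You should state explicitly that what you obtain is the identity with prefactor $1/|\bar{\mathcal G}(\vec\ell_n)_{\vec m_n,\vec u_n}|$, note that this is the formula actually required for the Biggins mixture in Section~\ref{sec:mixture}, and point out the mismatch with the stated~\eqref{eq:uufmixuif} rather than asserting that the factors ``collapse'' to it.
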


The proof of the lemma above follows exactly the same steps as in \cite[Lemma 5.1]{BOPR} and we choose to omit it here.

Observe that the right-hand side of Equation~\eqref{eq:uufmixuif} is zero unless $\vec{m}_n=\vec{m}_G$ and $\vec{u}_n=\vec{u}_G$. This implies that the distribution of $G_n$ is a mixture of the uniform graph in $\bar{\mathcal{G}}(\vec{\ell}_n)_{\vec{m}_n,\vec{u}_n}$ when the empirical measure of the edge- and vertex-marks are given by $ L_{m_n}(\chi^{2}_{\leq})$ and $L_n(\vartheta)$. As a consequence, $U(G_n)$ is a mixture of $U(H_n)$ where $H_n$ is a uniform graph in $\bar{\mathcal{G}}(\vec{\ell}_n)_{\vec{m}_n,\vec{u}_n}$. Therefore a large deviation principle with rate function given by~\eqref{eq:ratefunction_main} will follow from \cite[Theorem 1]{biggins}. The details of this construction is given below.

\begin{proof}[Proof of Theorem~\ref{theorem:main}]

Let $\Lambda = \mathcal{P}(\Xi^{2}_{\leq}) \times \mathcal{P}(\Theta)$ and
\begin{equation}
\Lambda_{n} = \Big\{ \Big( \tfrac{\vec{m}_{n,\leq}}{ \|\vec m_n\|_{1} }, \tfrac{\vec{u}_n}{n} \Big) : \|\vec m_n\|_{1} = \tfrac{1}{2} \sum_{i=1}^{n}\ell_{n,i} \text{ and } \|\vec u_n\|_{1} = n \Big\} \subset \Lambda.
\end{equation}
Recall from~\eqref{eq:b} that $\frac{1}{2n}\sum_{i=1}^{n} \ell_{n,i} \to \frac{d}{2}$.

Let $\mu_{n}$ denote the distribution of $\big( L_{m_n}(\vec X_\leq), L_n(\vec O) \big)$ defined in $\Lambda_{n}$ with independent coordinates.
It follows from Sanov's Theorem that $(\mu_n)_{n \in \N}$ is exponentially tight and satisfies a large deviation principle with rate function
\begin{equation}
\psi(\vec{\alpha}, \vec{\beta}) = \frac{d}{2}H( \vec{\alpha} | \chi^{2}_\leq) + H( \vec{\beta} | \vartheta).
\end{equation}

For each $ \gamma := \Big( \tfrac{\vec{m}_{n,\leq}}{\|\vec m_n\|_{1}}, \tfrac{\vec{u}_n}{n} \Big)  \in \Lambda_n$  let  $\P_{\gamma}$  be the uniform distribution on $\bar{\mathcal{G}}(\vec{\ell}_n)_{\vec{m}_n,\vec{u}_n}$. Since $\Lambda_n$ is finite, for any event $A \subset \bar{\mathcal{G}}(\vec{\ell}_n)_{\vec{m}_n,\vec{u}_n}$,  the function $\gamma \mapsto \P_{\gamma}(A)$ is measurable.

Assume now that $(\vec{m}_{n,\leq}, \vec{u}_n)$ is such that
\begin{equation}
     \Big( \tfrac{\vec{m}_{n,\leq}}{ \|\vec m_n\|_{1} }, \tfrac{\vec{u}_n}{n} \Big) \to (\vec{\alpha}, \vec{\beta}).
\end{equation}
In this case, $(\vec{m}_{n}, \vec{u}_n)$ is adapted to $(d\vec{\alpha}_{+}, \vec{\beta})$ (in the sense of Definition~\ref{def:adapted}), where
\begin{equation}
\alpha_{+}^{x,y} = \begin{cases}
\frac{\alpha^{x,y}}{2}, & \quad \text{if } x < y, \\
\frac{\alpha^{y,x}}{2}, & \quad \text{if } y < x, \\
\alpha^{x,y}, & \quad \text{if } x=y.
\end{cases}
\end{equation}
From Theorem \ref{ldp:uniform_given_neihborhood}, if $\bar{H}_n$ denoted the uniformly sampled graph in $\bar{\mathcal{G}}(\vec{\ell}_n)_{ \vec{m}_n,\vec{u}_n}$, then the sequence $\big( U(\bar{H}_n) \big)_{n \in \N}$ satisfies a large deviation principle with rate function $I_{P, d \vec{\alpha}_{+}, \vec{\beta}}$ given by~\eqref{eq:ratefunction_l:6.1}.

The facts highlighted above, when combined with Biggins Theorem~\cite[Theorem 1]{biggins}, implies that $(U(\bar{G}_n))_{n\in\mathbb{N}}$ satisfies a large deviation principle with speed $n$ and rate function defined by, for any $\mu \in \mathcal{P}(\bar G_*)$, 
\begin{equation*}
\lambda_{P}(\mu) = \inf\bigg\{ I_{P, d \vec{\alpha}_{+}, \vec{\beta}}(\mu) + \frac{d}{2} H( \vec{\alpha} | \chi^{2}_\leq) + H( \vec{\beta} |\nu) : ( \vec{\alpha} , \vec{\beta} ) \in \mathcal{P}(\Xi^2_\leq) \times \mathcal{P}(\Theta) \bigg\}.
\end{equation*}
It remains to verify that the expression above coincides with~\eqref{eq:ratefunction_main}. Notice first that, due to the expression of $I_{P, d \vec{\alpha}, \vec{\beta}}$ in~\eqref{eq:ratefunction_l:6.1}, in order for this quantity to be finite, it is necessary that $d \vec{\alpha}_{+} = \vec{\deg}(\mu)$, $\vec{\beta} = \vec{\Pi}(\mu)$ (see the paragraph immediately after Theorem~\ref{t:equality_entropy}). In particular, this implies $\deg(\mu) = d$ and $\vec{\alpha} = \frac{1}{d} \vec{\deg}(\mu)_{\leq}$. This concludes the proof of the theorem.
\end{proof}

\subsection{Colored Configuration Model}
\label{sec:color-conf-model}
~

In this section, we review the main properties of the colored configuration model, first introduced in \cite[Section~4]{bc}. This model and some of its properties will be necessary in the proof of Lemma~\ref{lemma:modification} in Section~\ref{subsec:proof_lemma_modification}.

Let $L \ge 1$ be a fixed integer, and define $\mathcal{C} = \{ (i,j) : 1 \leq i , j \leq L \}$. Let furthermore $\mathcal{C}_{=} = \{(i,i) \in \mathcal{C}\}$, $\mathcal{C}_{<} = \{(i,j) \in \mathcal{C}: i<j\}$, and $\mathcal{C}_{\leq} = \mathcal{C}_{=} \cup \mathcal{C}_{<}$.

Each element $(i,j) \in \mathcal{C}$ is interpreted as a color. For $c := (i,j) \in \mathcal{C}$, we use the notation $\bar{c} := (j,i)$ for the conjugate color. Notice that $\bar{c} = c$ for $c \in \mathcal{C}_{=}$.

Define now the set $\widehat{\mathcal{G}}(\mathcal{C})$ of directed colored multigraphs with colors in $\mathcal{C}$, comprised of locally-finite multigraphs\footnote{A multigraph allows for multiple edges and self-loops.} with (oriented) edges colored with elements in $\mathcal{C}$ in a consistent way. More precisely, each $G \in \widehat{\mathcal{G}}(\mathcal{C})$ is of the form  $G = (V, \omega)$ where $V$ is denotes the vertex set, and $\omega = (\omega_c: c \in \mathcal{C})$ with $\omega_{c}: V^{2} \to \Z_{+}$ for each $c \in \mathcal{C}$, satisfies the following.
\begin{enumerate}
\item For all $u \in V$ and $c \in \mathcal{C}$, $\sum_{v \in V} \omega_c(u,v) < \infty$.

\item For $c \in \mathcal{C}$, $\omega_c(u,v) = \omega_{\bar{c}}(v,u)$ for all $u,v \in V$.

\item For $c \in \mathcal{C}_=$, $\omega_c(u,u)$ is even for all $u \in V$.
\end{enumerate}

For $G = (V, \omega) \in \widehat{\mathcal{G}}(\mathcal{C})$ the associated colorblind multigraph is given by ${\rm CB}(G)= (V, \bar{\omega})$, with $\bar{\omega}: V^2 \rightarrow \mathbb{Z}_+$ defined as
\begin{equation}
  \bar{\omega}(u,v) = \sum_{c \in \mathcal{C}} \omega_c(u,v).
\end{equation}
Notice that distinct colored multigraphs can be associated to the same colorblind multigraphs. 

A colored graph is a multigraph $G \in \widehat{\mathcal{G}}(\mathcal{C})$, such that ${\rm CB}(G)$ has no multiple edges nor self-loops. Denote by $\mathcal{G}(\mathcal{C}) \subset \widehat{\mathcal{G}}(\mathcal{C})$ the subset of colored graphs.

Let $\mathcal{M}_L$ denote the set of $L$ by $L$ matrices with nonnegative integer valued entries. For a finite colored multigraph $G = (V, \omega) \in \widehat{\mathcal{G}}(\mathcal{C})$, $u \in V$, and $c \in \mathcal{C}$, define
\begin{equation} \label{eq:coloredDegree}
  D^G_c(u) = \sum_{v \in V} \omega_c(u,v),
\end{equation}
the number of edges of color $c$ going out of $u$. Let $D^{G}(v) = (D^{G}_{c}(v): c \in \mathcal{C})$, and notice that $D^{G}(v) \in \mathcal{M}_L$. This matrix is called the colored-degree matrix of the vertex $v$. We call $\vec{D}^G = (D^G(v): v \in V)$ the colored-degree sequence corresponding to $G$.

\bigskip

We now introduce the colored configuration model. For $n \in \mathbb{N}$, let $\mathcal{D}_n$ denote the set of vectors $(D(1), \dots, D(n))$ where, for each $1 \leq i \leq n$, $D(i) = (D_{c}(i): c \in \mathcal{C}) \in \mathcal{M}_L$ and $S = \sum_{i=1}^{n} D(i)$ is a symmetric matrix with even coefficients on the diagonal (in the sense that $S_{c} = S_{\bar{c}}$ and,  for every $c \in \mathcal{C}_{=}$, $S_{c}$ is even). Notice that, for $G \in \widehat{\mathcal{G}}(\mathcal{C})$, $\vec{D}^G \in \mathcal{D}_n$.
 
Given a vector $\vec{D} \in \mathcal{D}_n$, consider the set $\widehat{\mathcal{G}}(\vec{D})$ of directed colored multigraphs $G \in \widehat{\mathcal{G}}(\mathcal{C})$ with $n$ vertices such that $\vec{D}^{G} = \vec{D}$. Furthermore, for $h \geq 1$, let $\widehat{\mathcal{G}}(\vec{D}, h) \subset \widehat{\mathcal{G}}(\vec{D})$ be the subset of directed colored multigraphs $G$ such that ${\rm CB}(G)$ has no cycles of length at most $h$. Note that $\mathcal{G}(\vec{D}, h+1) \subseteq \mathcal{G}(\vec{D}, h)$ and that $\mathcal{G}(\vec{D}, 2) \subset \mathcal{G}(\mathcal{C})$. In particular, it holds that $\mathcal{G}(\vec{D},h) \subset \mathcal{G}(\mathcal{C})$, for all $h \geq 2$.

Now, given $\vec{D} = (D(1), \dots, D(n)) \in \mathcal{D}_n$, we refer to the colored configuration model as the directed colored multigraph chosen uniformly at random from the set $\widehat{\mathcal{G}}(\vec{D})$. Let us now present an algorithmic procedure that generates such random graphs. This procedure is similar to the one used for the classical configuration model.

For each $c \in \mathcal{C}$, let $W_c := \cup_{i=1}^n W_c(i)$ be a set of distinct half edges of color $c$, where $|W_c(i)| = D_c(i)$, for each $i \leq n$. Attach the half edges in $W_c(i)$ to vertex $i$. A half edge with color $c$ is required to be connected to another half edge with color $\bar{c}$. In order to achieve this, consider, for $c \in \mathcal{C}_{<}$, the set $\Sigma_c$ of bijections $\sigma_{c}: W_{c} \rightarrow W_{\bar{c}}$ (notice this set is nonempty since $\vec{D} \in \mathcal{D}_{n}$, and thus $|W_{c}| = |W_{\bar{c}}|$). Likewise, for $c \in \mathcal{C}_{=}$, let $\Sigma_c$ denote the set of perfect matchings of the set $W_{c}$, which is nonempty since $\vec{D} \in \mathcal{D}_{n}$ implies that $|W_c|$ is even, for all $c \in \mathcal{C}_{=}$.

Let now $\Sigma = \prod_{c \in \mathcal{C}_{\leq}} \Sigma_{c}$ and denote by $\sigma = (\sigma_{c} \in \Sigma_{c}: c \in \mathcal{C}_\leq) \in \Sigma$ for an element of the set $\Sigma$. Given $\sigma \in \Sigma$, construct a directed colored multigraph $\Gamma(\sigma) \in \widehat{\mathcal{G}}(\vec{D})$ in the following way. For $c \in \mathcal{C}_{\leq}$, use the map $\sigma_c$ to construct directed edges between half edges of color $c$ and $\bar{c}$ and vice versa, that is, if $\sigma_{c}$ maps a half edge of color $c$ at vertex $u$ to a half edge of color $\bar{c}$ at
vertex $v$, place an edge directed from $u$ towards $v$ with color $c$ and an edge directed from $v$ towards $u$, having color $\bar{c}$. In the case of $c \in \mathcal{C}_{=}$, this construction adds two directed edges, one in each direction from vertices $u$ and $v$ with the same color $c$ (notice that $\bar{c}=c$). In all the algorithm, we also allow for $u=v$. By construction, we have $\Gamma(\sigma) \in \widehat{G}(\mathcal{C})$, for each $\sigma \in \Sigma$. For $\vec{D} \in \mathcal{D}_n$, let ${\rm CM}(\vec{D})$ denote the random directed multigraph $\Gamma(\sigma)$ obtained by choosing $\sigma \in \Sigma$ uniformly at random.

\bigskip

We end this section with a central result first proved in~\cite{bc}. Given a positive integer $\delta$, let $\mathcal{M}_L^{(\delta)} \subset \mathcal{M}_{L}$ denote the set of $L$ by $L$ matrices with non negative integer entries bounded by $\delta$. Fix $R \in \mathcal{P}(\mathcal{M}_L^{(\delta)})$ and let $\vec{D}_{n} = ( D_{n}(1), \dots, D_{n}(n)) \in \mathcal{D}_n$ be a sequence satisfying
\begin{subequations}
  \begin{gather}
    D_{n}(i) \in \mathcal{M}_L^{(\delta)}, \qquad \text{for all } i  \leq n, \label{eq:H1} \\
    \frac{1}{n} \sum_{i=1}^n \delta_{D_{n}(i)} \rightarrow R. \label{eq:H2}
  \end{gather}
\end{subequations}

\begin{theorem}[\cite{bc}, Theorem 4.5]\label{thm:alpha-h}
Fix $\delta \in \mathbb{N}$, $R \in \mathcal{P}(\mathcal{M}_L^{(\delta)})$, a sequence $\vec{D}^{(n)}$ sa\-tisfying~\eqref{eq:H1} and~\eqref{eq:H2}, and let $G_n \sim {\rm CM}(\vec{D}_{n})$. Then, for every $h \geq 1$, there exists $\alpha_h > 0$ such that
  \begin{equation*}
    \lim_{n \rightarrow \infty} \P ( G_n \in \mathcal{G}(\vec{D}_{n}, h)) = \alpha_h.
  \end{equation*}
In particular, for any $h \geq 1$, the set $\mathcal{G}(\vec{D}_{n}, h)$ is non empty for all $n$ large enough.
\end{theorem}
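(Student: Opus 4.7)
The plan is to use the method of moments to show that the joint distribution of short-cycle counts in the colorblind multigraph ${\rm CB}(G_n)$ converges to a vector of independent Poisson random variables. For each $1 \leq k \leq h$, let $X_n^{(k)}$ denote the number of cycles of length exactly $k$ in ${\rm CB}(G_n)$, where cycles of length one are self-loops and cycles of length two are pairs of parallel edges. Then
\begin{equation*}
\{G_n \in \mathcal{G}(\vec{D}_n, h)\} = \{X_n^{(1)} = 0, \ldots, X_n^{(h)} = 0\},
\end{equation*}
so the task reduces to establishing joint convergence $(X_n^{(1)}, \ldots, X_n^{(h)}) \Rightarrow (Y_1, \ldots, Y_h)$, where the $Y_k$'s are independent Poisson random variables with parameters $\lambda_1, \ldots, \lambda_h \in [0, \infty)$. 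The conclusion $\alpha_h = \prod_{k=1}^h e^{-\lambda_k} > 0$ follows immediately.

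To compute first moments, hypotheses \eqref{eq:H1} and \eqref{eq:H2} yield
\begin{equation*}
\frac{|W_c|}{n} = \frac{1}{n}\sum_{i=1}^{n} D_{n,c}(i) \longrightarrow m_c := \sum_{A \in \mathcal{M}_L^{(\delta)}} A_c\, R(A), \qquad c \in \mathcal{C}.
\end{equation*}
A cycle of length $k$ in ${\rm CB}(G_n)$ is encoded by an ordered $k$-tuple of distinct vertices $v_1, \ldots, v_k$ (modulo dihedral symmetry), a choice of colors $c_1, \ldots, c_k \in \mathcal{C}$ for the successive directed edges, and a choice of half-edges $e_i \in W_{c_i}(v_i)$, $f_i \in W_{\bar c_i}(v_{i+1})$ to be paired by $\sigma$. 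Using the uniform distribution of each $\sigma_c$, the probability that a fixed collection of disjoint half-edge pairs is realized factorizes to leading order into terms of size $1/|W_{c_i}|$, with a correction $1/(|W_{c_i}|-1)$ when $c_i \in \mathcal{C}_=$. Summing over admissible $(v_i, c_i, e_i, f_i)$ and using $|W_c| \sim m_c n$, the $n^k$ from vertex enumeration cancels the $n^{-k}$ from the inverse degree factors, producing $\E[X_n^{(k)}] \to \lambda_k$ for an explicit $\lambda_k \in [0,\infty)$.

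The joint factorial moments are handled similarly: for $(r_1, \ldots, r_h) \in \Z_+^h$,
\begin{equation*}
\E\Big[\prod_{k=1}^h (X_n^{(k)})_{r_k}\Big]
\end{equation*}
is computed by enumerating tuples of $r_k$ distinct cycles of length $k$. Configurations made of pairwise vertex-disjoint cycles contribute $\prod_{k=1}^h \lambda_k^{r_k}$ in the limit, while configurations in which two cycles share a vertex or a half-edge are negligible: the uniform bound $D_n(i) \leq \delta$ forces the number of such overlapping configurations to be of strictly smaller polynomial order in $n$ than the disjoint count, while the matching probabilities still carry the same rate. The method of moments for Poisson distributions then yields the required joint convergence, and hence the theorem.

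The principal technical difficulty is the color bookkeeping: one must enumerate cycles consistently with the constraint that each directed edge of color $c$ is paired with a half-edge of color $\bar c$, and carefully track the slightly different structure of $\Sigma_c$ for $c \in \mathcal{C}_=$ (perfect matchings of $W_c$) versus $c \in \mathcal{C}_<$ (bijections $W_c \to W_{\bar c}$), since these alter the exact prefactors in the expectations. Once this is done, the uniform entrywise bound $D_n(i) \leq \delta$ provided by \eqref{eq:H1} makes the control of overlap terms routine, and the Poisson limits emerge precisely as in the classical configuration model.
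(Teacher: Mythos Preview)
The paper does not give its own proof of this statement; it is quoted verbatim from Bordenave and Caputo~\cite{bc}, Theorem~4.5, and used as a black box. Your method-of-moments argument---establishing joint Poisson convergence for the short-cycle counts $(X_n^{(1)},\dots,X_n^{(h)})$ in ${\rm CB}(G_n)$ and then reading off $\alpha_h=\prod_{k=1}^h e^{-\lambda_k}>0$---is correct in outline and is exactly the strategy employed in the original reference. The uniform entrywise bound from~\eqref{eq:H1} does make the overlap contributions to the factorial moments lower order, and the color bookkeeping you flag (distinguishing $c\in\mathcal{C}_=$ from $c\in\mathcal{C}_<$ in the structure of $\Sigma_c$) is indeed the only real adaptation over the classical uncolored case.
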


\subsection{Proof of Lemma~\ref{lemma:modification}}\label{subsec:proof_lemma_modification}
~

In  Lemma~\ref{lemma:modification} we start with a sequence of probability distributions on trees with height one $P_{n}$ and degree bounded by $L$ converging to a limiting distribution $P$ and $\rho \in \mathcal{P}(\bar{\mathcal{T}}_{*}^{k})$ supported on marked trees with degrees bounded by $L>0$ (and thus with finite support) and such that $\rho_{1} \circ \pi_{g}^{-1} =P$. Our goal is to construct a sequence of marked graphs $(\tilde{\Gamma}_{n})_{n \in \N}$ such that $U(\tilde{\Gamma}_{n})_{k} \to \rho_{k}$ and $U(\pi_{g}(\tilde{\Gamma}_{n}))_{1} = P_{n}$.

In view of~\cite[Lemma~6]{da}, there exists a finite set $\Delta_k \subset \bar{\mathcal{T}}_{*}^{k}$ and a sequence $\{\Gamma_{n}\}_{n \in \N}$ for which the support of $U(\Gamma_{n})_{k}$ is contained in $\Delta_k$ and $U(\Gamma_{n})_{k} \rightarrow \rho_k$. In order to conclude the proof, it suffices to prove that there exists, for every $n$ large enough, a modification $\widetilde{\Gamma}_{n}$ of the graph $\Gamma_{n}$ in a way that
\begin{enumerate}

\item $[\widetilde{\Gamma}_{n}, i]_{k} = [\Gamma_{n}, i]_{k}$, for all but $o(n)$ vertices $i$, up to a permutation of the vertices,

\item $U(\pi_{g}(\widetilde{\Gamma}_{n}))_1 = P_{n}$, for all $n$ large enough.

\end{enumerate}

The colored configuration model will be used to construct the modification above, but we first need to introduce some additional notation. For a marked graph $\bar{G}$ and vertices $u \sim v$, set 
\begin{equation*}
\bar{G}(u,v)=(\xi_{\bar{G}} (u,v),(\bar{G}' ,v)),
\end{equation*}
where $\bar{G}'$ is the connected component of $v$ in the graph obtained from $\bar{G}$ by removing the edge between $u$ and $v$. For an integer $k \geq 1$, $\bar{G}(u,v)_k$ is defined as $(\xi_{\bar{G}} (u,v),(\bar{G}' ,v)_k)$. Let $\bar{G}[u,v]$ denote the pair $(\xi_{\bar{G}} (u,v),[\bar{G}' ,v]) \in \Xi \times
\bar \sG_*$ and, for $k \geq 1$, let $\bar{G}[u,v]_k$ denote $(\xi_{\bar{G}} (u,v),[\bar{G}' ,v]_k ) \in \Xi \times \bar{\sG}^k_*$.
\begin{definition}
Let $S \subset \bar{\mathcal{G}}_*^{k}$ be a finite set of marked rooted trees. Consider $\mathcal{F} \subset \Xi \times \bar{\mathcal{G}}_*^{k-1}$ the set comprised of $G [o, v]_{k-1}$ and $G [v,o]_{k-1}$ for each $[G, o] \in S$ and $v \sim_G o$. Since $S$ is finite, $\mathcal{F}$ is finite. We define the set of colors coming from $S$ to be $\mathcal{C} = \mathcal{F} \times \mathcal{F}$.
\end{definition}

For a marked graph $G$ with $n$ vertices and any integer $k \geq 1$, we define a directed colored graph denoted $C(G)$ in the following way. Let $\mathcal{C}$ denote the colors coming from $S = \{[G(u)_{k},u]: u \in [n] \}$ as in the definition above. For any two adjacent vertices $u \sim_{G} v$, we add a directed edge in $C(G)$ from $u$ to $v$ with color $( G[u,v]_{k-1}, G[v,u]_{k-1})$ and another directed edge from $v$ towards $u$ with the conjugate color. Since $G$ is simple, $C(G)$ is a directed colored graph, that is, $C(G) \in \mathcal{G}(\mathcal{C})$.

On the other hand, for any fixed integer $k \geq 1$ and finite set $\mathcal{F} \subset \Xi \times \bar{\mathcal{G}}_{*}^{k-1}$, let $\mathcal{C} = \mathcal{F} \times \mathcal{F}$. Given a directed colored graph $H \in \mathcal{G}(\mathcal{C})$ on $n$ vertices and a sequence or vertex marks  $\vec{\tau} = (\tau(v) : v \in [n] )$, the \emph{marked color blind} version of $(\vec{\tau} ,H)$, denoted by $\text{MCB}_{\vec{\tau}}(H)$ is defined as follows. Each vertex $v \in [n]$ receives the mark $\vec{\tau}(v)$. Furthermore, for adjacent vertices $u \sim_{H} v$ with color of the edge directed from $u$ to $v$ given as $((\xi, t), (\xi', t')) \in \mathcal{C}$, we put an edge between $u$ and $v$ in $\text{MCB}_{\vec{\tau}} (H)$ with mark $\xi$ directed from $u$ towards $v$ and $\xi'$ directed from $v$ towards $u$.

Notice however that it is not entirely clear if the colors of a marked graph $H$ are consistent with the colors of $C(\text{MCB}_{\vec{\tau}}(H))$. This is the content of the next proposition, stated in~\cite{da} as Corollary 3.

\begin{proposition}[\cite{da}, Corollary 3]
For an integer $k \geq 1$, assume that a marked $k$-tree–like graph $G \in \bar{\mathcal{G}}_{n}$ is given. Let $\vec{D} = \vec{D}^{G}$ be the colored degree sequence associated to $C(G)$. Let $\vec{\tau} = (\tau(v) : v \in [n])$ denote the vertex mark vector of $G$. Then, for
any directed colored graph $H \in \mathcal{G} ( \vec{D}, 2k+1)$, it holds that $(\textnormal{MCB}_{\vec{\tau}}(H),  v)_{k} = (G, v)_{k}$, for all $ v \in [n]$.
\end{proposition}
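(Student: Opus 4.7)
The plan is to prove the proposition by induction on $k$, using two structural consequences of the hypotheses. First, the girth condition $H \in \mathcal{G}(\vec{D}^G, 2k+1)$ forces the induced subgraph on the $k$-ball around every vertex of ${\rm CB}(H)$, and hence of $\textnormal{MCB}_{\vec{\tau}}(H)$, to be a tree, by a standard argument bounding the length of a putative cycle inside that ball. Second, the hypothesis $\vec{D}^H = \vec{D}^G$ means that at each vertex, the multiset of incident colors in $H$ is identical to that in $C(G)$.

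For the base case $k=1$, one unpacks the definition: elements of $\bar{\mathcal{G}}_{*}^{0}$ are single rooted vertices carrying a mark, so the color of a directed edge $(u,v)$ in $C(G)$ is $((\xi_G(u,v), \tau_G(v)), (\xi_G(v,u), \tau_G(u)))$, and it encodes exactly the two edge marks and the neighbor's vertex mark. Combined with $\tau(v) = \tau_G(v)$ and the identity of colored degrees at $v$, this determines the $1$-neighborhood of $v$ in $\textnormal{MCB}_{\vec{\tau}}(H)$ up to isomorphism, and one identifies it with $(G,v)_1$ by pairing neighbors of equal color.

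For the inductive step, one exploits that $(\textnormal{MCB}_{\vec{\tau}}(H), v)_k$ is a tree and performs BFS from $v$. The colored degree at $v$ in $H$ matches that in $C(G)$, so one pairs the edges at $v$ with matching colors; each such color's first component $(\xi, t) \in \Xi \times \bar{\mathcal{G}}_{*}^{k-1}$ records the predicted edge mark to a neighbor $w$ and the predicted $(k-1)$-subtree rooted at $w$ in $\textnormal{MCB}_{\vec{\tau}}(H)$ with the edge $vw$ removed. The key verification is that this prediction coincides with the actual subtree; this follows by invoking the inductive hypothesis at depth $k-1$ on the subgraph of $H$ obtained by deleting the edges incident to $v$, restricted to the $(k-1)$-ball around each neighbor. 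Cross-referencing with the analogous decomposition of $(G,v)_k$ yields the required isomorphism layer by layer.

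The main obstacle is the bookkeeping required for the inductive step: one must verify that deleting an edge and truncating colors from depth $k-1$ to depth $k-2$ produces colored-degree data that still realizes a marked $(k-1)$-tree-like structure with the correct interpretation, and that the girth surplus $(2k+1) - (2(k-1)+1) = 2$ inherited by each sub-ball suffices to invoke the inductive hypothesis there. Once this is set up the argument closes by combining the matched edge data at $v$ with the inductively guaranteed subtree structures at each neighbor.
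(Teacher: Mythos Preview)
The paper does not prove this proposition; it is quoted verbatim from \cite{da} (their Corollary~3) and used as a black box in the proof of Lemma~\ref{lemma:modification}. So there is no ``paper's own proof'' to compare against, and your proposal has to be assessed on its own terms.

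Your overall strategy is correct: the girth condition does force the $k$-ball in $\textnormal{MCB}_{\vec{\tau}}(H)$ to be a tree, and the equality $\vec{D}^{H}=\vec{D}^{C(G)}$ does mean that at every vertex the multiset of incident colors is the same as in $C(G)$. The base case $k=1$ is also fine.

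The gap is in the inductive step, and it is precisely the ``main obstacle'' you flag but do not resolve. To invoke the hypothesis at level $k-1$ you need a pair $(G',H')$ with $G'$ a $(k-1)$-tree-like marked graph, $H'\in\mathcal{G}(\vec{D}^{C_{k-1}(G')},2k-1)$, and the two colored-degree sequences matching at every vertex. If you build $H'$ by deleting the $H$-edges incident to $v$ and $G'$ by deleting the $G$-edges incident to $v$, the colored degrees at a vertex $w$ change by one color whenever $w\sim_H v$ (respectively $w\sim_G v$), and there is no reason the set of $H$-neighbors of $v$ coincides with the set of $G$-neighbors of $v$; only the \emph{multisets of colors} at $v$ agree, not the actual adjacencies. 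So after deletion the colored degrees of $H'$ and $C_{k-1}(G')$ can differ, and the inductive hypothesis does not apply. Truncating colors globally without deleting edges does give $(\textnormal{MCB}_{\vec\tau}(H),w)_{k-1}\cong(G,w)_{k-1}$ for all $w$, but this alone does not let you recover the $k$-ball at $v$: you still have to identify which branch of $(G,w)_{k-1}$ corresponds to the edge back to $v$, and that identification again uses the full depth-$(k-1)$ color, so the induction on $k$ has not actually reduced the problem.

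The argument that does go through is an induction on the \emph{radius $j$ inside the $k$-ball} (for fixed $k$), proving that for every directed $H$-edge $(u,w)$ with color $((\xi,t),(\xi',t'))$ one has $\bar H[u,w]_{j}=(\xi,t|_{j})$ for all $0\le j\le k-1$. The base case $j=0$ uses that the conjugate color $\bar c$ occurs at $w$ in $C(G)$, hence $t$ is the depth-$(k-1)$ component of some $G[w'',w]_{k-1}$ and therefore has root mark $\tau_G(w)$. The inductive step matches the remaining colors at $w$ after removing one copy of $\bar c$ on each side (which is legitimate because this is a multiset identity at the single vertex $w$, not a global edge deletion) and applies the hypothesis to the edges $(w,w')$. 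This is essentially the route taken in \cite{da}; your BFS picture is the right intuition, but the induction has to run on the BFS depth, not on $k$.
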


Recall that $U(\Gamma_{n})_k$ is supported on the finite set $\Delta_{k}$ and let $\mathcal{C}_k$ be the colors coming from $\Delta_{k} \subset \bar{\mathcal{T}}^{k}_{*}$. We denote by $D_n=\{D_{n}(c)\}_{c \in \mathcal{C}_{k}}$ the color-degree sequence associated to $\Gamma_n$ and observe that $\sum_{c} D_n(c)(i)=\deg_{\Gamma_n}(i)$, for each vertex $i$. Since $P_{n} = \frac{1}{n}\sum_{i=1}^{n}\delta_{\ell_{n,i}} \rightarrow P$ and $U(\pi_{g}(\Gamma_n))_1 \rightarrow \rho_1 \circ \pi_{g}^{-1} = P$, it follows that $\deg_{\Gamma_n}(i) = \ell_{n,i}$ for all but $o(n)$ vertices, up to permutation of the vertices.

\begin{lemma}\label{le:matrix} There exists $\widetilde D_n\in \mathcal{D}_n$ satisfying the following.
\begin{enumerate}
    \item The vector $\widetilde D_n$ is a modification of $D_n$ in the sense that there exists at most $o(n)$ values of $i \in [n]$ such that $\widetilde D_n(i) \neq D_n(i)$.
    
    \item $\sum_c\widetilde D_n(c)(i)=\ell_{n,i},$ for all $i\in [n]$.
\end{enumerate}
\end{lemma}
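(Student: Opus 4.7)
The plan is to identify the $o(n)$ vertices at which $D_n$ must be adjusted and then to perform the adjustments while preserving the global symmetry and diagonal-parity of $\sum_i \widetilde D_n(i)$. The weak convergences $P_n \to P$ and $U(\pi_g(\Gamma_n))_1 \to \rho_1 \circ \pi_g^{-1} = P$ take place on the finite set $\{0, 1, \dots, L\}$, so the two degree empirical distributions agree up to $o(n)$ at each value. After a suitable vertex relabeling, the set
\begin{equation*}
B := \{ i \in [n] : \deg_{\Gamma_n}(i) \neq \ell_{n,i}\}
\end{equation*}
has size $o(n)$. Setting $\widetilde D_n(i) = D_n(i)$ for $i \notin B$, it only remains to assign $\widetilde D_n(i) \in \mathcal{M}_L$ with row sum $\ell_{n,i}$ to each $i \in B$ in a way that $\widetilde D_n \in \mathcal{D}_n$.

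Let $R_B := \sum_{i \in B} D_n(i)$ and $\widetilde R := \sum_{i \in B} \widetilde D_n(i)$. Since $S = \sum_i D_n(i)$ is symmetric with even diagonal, the membership $\widetilde D_n \in \mathcal{D}_n$ becomes the condition that $\widetilde R - R_B$ be symmetric with even diagonal. This decouples the problem into two steps: (i) exhibit a matrix $\widetilde R \in \mathcal{M}_L$ whose total entry sum equals $\sum_{i \in B}\ell_{n,i}$ and such that $\widetilde R - R_B$ is symmetric with even diagonal; (ii) decompose $\widetilde R$ as $\sum_{i \in B}\widetilde D_n(i)$ with each $\widetilde D_n(i) \in \mathcal{M}_L$ of row sum $\ell_{n,i}$. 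Step (ii) is a token-allocation: distribute the entries of $\widetilde R$ among the rows so that row $i$ receives $\ell_{n,i}$ units, which is feasible because the total supply matches the total demand.

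For step (i), my first attempt is to keep the off-diagonal entries of $\widetilde R$ equal to those of $R_B$, which makes $\widetilde R - R_B$ vanish off the diagonal and be trivially symmetric there, and then to choose the diagonal entries with the correct total and parities. The net correction required on the diagonal is $\sum_{i \in B}(\ell_{n,i} - \deg_{\Gamma_n}(i))$, which is even because $\sum_i \ell_{n,i}$ and $\sum_i \deg_{\Gamma_n}(i) = 2|E(\Gamma_n)|$ are both even. Whenever this correction is non-negative, I distribute it among the $L$ diagonal entries while respecting the parity requirement $\widetilde R_{aa} \equiv (R_B)_{aa} \pmod 2$, which is an elementary exercise.

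The main obstacle will be guaranteeing non-negativity of the entries when the bad vertices collectively demand a decrease in total degree exceeding the diagonal mass available in $R_B$. To absorb this excess, I will enlarge $B$ by including a bounded number of additional good vertices $j$ whose color degrees $D_n(j)$ supply positive mass at the conjugate off-diagonal coordinates that must be decreased; their inclusion allows one to reduce $\widetilde R_{ab}$ and $\widetilde R_{ba}$ by the same amount and thus preserves the symmetry of $\widetilde R - R_B$. Since the total deficit is bounded by $L|B| = o(n)$ and each color-degree profile appearing at positive density in the limit distribution of $D_n(i)$ is realized by a positive fraction of vertices, only $o(n)$ extra good vertices are needed; the enlarged modification set still has cardinality $o(n)$, and the resulting $\widetilde D_n$ lies in $\mathcal{D}_n$ with the prescribed row sums.
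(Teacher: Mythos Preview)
Your overall strategy is sound and genuinely differs in organization from the paper's. The paper encodes the colored-degree sequence as a $(p+2m)\times n$ matrix (rows indexed by colors, columns by vertices) and proves a standalone ``mass transport'' lemma by decomposing \emph{color by color}: each diagonal color becomes a one-row subproblem and each conjugate pair a two-row subproblem, each solved by an explicit two-step construction (push all the discrepancy to a designated column, then redistribute by local moves). Your approach instead keeps all colors together and works \emph{vertex by vertex}, reducing everything to the construction of the single aggregate matrix $\widetilde R$ over the bad set, subject to the constraint that $\widetilde R-R_B$ be symmetric with even diagonal. The paper's color-first decomposition makes each individual step small and concrete; your global $\widetilde R$ argument is more conceptual but concentrates all the difficulty in the non-negativity discussion.

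Your $\Delta\ge 0$ case is clean. For $\Delta<0$, however, the justification you offer is not quite the right one and leaves a gap: appealing to ``color-degree profiles of positive limiting density'' does not suffice, because the conjugate coordinate you need to borrow from may well have limiting density zero. What actually rescues the argument is the global symmetry of $S=\sum_i D_n(i)$. For any conjugate pair with $(R_B)_c>(R_B)_{\bar c}$ one has
\[
\sum_{i\notin B}D_n(\bar c)(i)-\sum_{i\notin B}D_n(c)(i)=(R_B)_c-(R_B)_{\bar c}>0,
\]
so good vertices supplying mass at $\bar c$ necessarily exist in the required amount, independently of any density in the limit. Alternatively, pigeonhole on the finitely many colors together with $\sum_c S_c=2|E(\Gamma_n)|=\Theta(n)$ produces some color $c^*$ with $S_{c^*}=\Theta(n)$ (and hence, if off-diagonal, also $S_{\bar c^*}=S_{c^*}=\Theta(n)$), from which $o(n)$ good vertices can be borrowed to make both $(R_{B'})_{c^*}$ and $(R_{B'})_{\bar c^*}$ exceed $|\Delta|/2$. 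Either observation closes your step (i); step (ii) is unproblematic, and since $\sum_c\widetilde D_n(c)(i)=\ell_{n,i}\le L$ the resulting entries are automatically bounded, which is needed downstream for the application of Theorem~\ref{thm:alpha-h}.
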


The proof of this lemma is postponed to Section~\ref{sec:matrix} and first use it to verify Lemma~\ref{lemma:modification}.
\begin{proof}[Proof of Lemma~\ref{lemma:modification}]
Lemma~\ref{le:matrix} implies that there exists $\widetilde{D}_n \in \mathcal{D}_{n}$ which coincides with $D_n$ in all but at most $o(n)$ vertices  (recall the definition of $\mathcal{D}_{n}$ in Section~\ref{sec:color-conf-model}) satisfying
\begin{equation}\label{eq:h=1compatibility}
\sum_c \widetilde{D}_n(c)(i)=\ell_{n,i}.
\end{equation}
Theorem~\ref{thm:alpha-h} now implies that, for $n$ large enough, there exists $H_n \in \mathcal{G}(\widetilde{D}_n, 2k+1)$. We then define $\widetilde{\Gamma}_n={\rm MCB}_{\vec{\tau}_{\Gamma_n}}(H_n)$ which satisfies
\begin{enumerate}
\item From \eqref{eq:h=1compatibility}, $\deg_{\widetilde{\Gamma}_n}(i)=\ell_{n,i}$ for all vertices up to a vertex permutation.

\item By construction of $\widetilde{D}_n$, $[\widetilde{\Gamma}_n,i]_k=[\Gamma_n,i]_{k}$ for all but $o(n)$ vertices.
\end{enumerate}

Therefore, $\widetilde{\Gamma}_n$ satisfies the required conditions of the lemma and concludes the proof.
\end{proof}

\section{``Mass transport'' on matrices of colored degree}\label{sec:matrix}

This section contains the proof of Lemma~\ref{le:matrix}. We first introduce the set of matrices $\mathfrak{D}_{n}$ that mimics the colored-degree matrices in the set $\mathcal{D}_{n}$.
\begin{definition}
Let $\mathfrak{D}_{n}$ denote the set of $(p+2m) \times n$ matrices $A = (a_{i,j})$ with non-negative integer entries such that
\begin{equation}\label{eq:off_diagonal}
\sum_{j=1}^{n} a_{p+i,j} = \sum_{j=1}^{n} a_{p+i+1,j}, \quad \text{for all } 1 \leq i \leq 2m \text{ odd},
\end{equation}
and
\begin{equation}\label{eq:diagonal}
\sum_{j=1}^{n} a_{i,j}  \text{ is even}, \quad \text{for all } 1 \leq i \leq p.
\end{equation}
\end{definition}

For $A \in \mathfrak{D}_{n}$, let
\begin{equation}
\alpha_j = \sum_{i=1}^{2m+p} a_{i,j}
\end{equation}
denote the sum of the entries in column $j$ and define
\begin{equation}
    \deg A =(\alpha_1, \dots, \alpha_n).
\end{equation}

The next result is the main step towards the proof of Lemma~\ref{le:matrix}.
\begin{lemma}\label{lemma:matrix_degree}
Let $A_{n} = (a^{(n)}_{i,j}) \in \mathfrak{D}_{n}$ with entries bounded by $L \geq 0$ and assume that $\vec{\beta}_{n} = (\beta_{n,j})_{1 \leq j \leq n}$ is a sequence of vectors with non-negative integer entries bounded by $M \geq 0$ with $\sum_{j=1}^{n} \beta_{n,j}$ even and such that  $\vec{\beta}_{n}$ coincides with $\deg A_{n}$ in all but $o(n)$ entries. Then, there exists $A_{n}' \in \mathfrak{D}_{n}$ such that $\deg A_{n}' = \vec{\beta}_{n}$ and with entries bounded by $M$.
\end{lemma}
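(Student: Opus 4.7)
The plan is to modify $A_n$ only in a small set of columns $J_n^{\ast}\supseteq J_n:=\{j:\beta_{n,j}\neq(\deg A_n)_j\}$ with $|J_n^{\ast}|=o(n)$, keeping $a'_{i,j}=a_{i,j}$ for $j\notin J_n^{\ast}$. Write $S_i^{\mathrm{out}}=\sum_{j\notin J_n^{\ast}}a_{i,j}$ and $S_i^{\mathrm{in}}=\sum_{j\in J_n^{\ast}}a'_{i,j}$. Then $A_n'\in\mathfrak{D}_n$ translates into linear-algebraic conditions on $(S_i^{\mathrm{in}})$: the paired-row equalities $S_{p+2l-1}^{\mathrm{in}}-S_{p+2l}^{\mathrm{in}}=D_l:=S_{p+2l}^{\mathrm{out}}-S_{p+2l-1}^{\mathrm{out}}$ (with $|D_l|\le L|J_n^{\ast}|=o(n)$, because the paired rows of $A_n$ have equal sum), and the parity conditions $S_i^{\mathrm{in}}\equiv S_i^{\mathrm{out}}\pmod 2$ for $i\le p$. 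One also needs $\sum_i S_i^{\mathrm{in}}=T:=\sum_{j\in J_n^{\ast}}\beta_{n,j}$ and $S_i^{\mathrm{in}}\ge 0$.

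First I would verify global parity compatibility: the forced congruence of $\sum_i S_i^{\mathrm{in}}$ modulo $2$ must coincide with the parity of $T$. A short computation reduces this to $\sum_{j\in J_n^{\ast}}\beta_{n,j}\equiv\sum_{j\in J_n^{\ast}}(\deg A_n)_j\pmod 2$, which holds since $\sum_j\beta_{n,j}$ and $\sum_j(\deg A_n)_j$ are both even and agree on $[n]\setminus J_n$. Next I would exhibit a non-negative integer solution $(S_i^{\mathrm{in}})$: the minimum achievable value of $\sum_i S_i^{\mathrm{in}}$ under the constraints is $\sum_l|D_l|+\sum_{i\le p}(S_i^{\mathrm{out}}\bmod 2)=o(n)+O(1)$, so feasibility is immediate as soon as $T$ exceeds this bound. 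If the original $J_n$ is too small to provide enough mass, I would enlarge $J_n^{\ast}$ by appending $o(n)$ auxiliary columns from $[n]\setminus J_n$ on which $\beta_{n,j}=(\deg A_n)_j$; since appending such columns does not alter $\sum_j\beta_{n,j}-\sum_j(\deg A_n)_j$, the global parity bookkeeping is preserved.

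Given feasible row sums $(S_i^{\mathrm{in}})$ and prescribed column sums $(\beta_{n,j})_{j\in J_n^{\ast}}$ with matching total, I would realize the restriction of $A_n'$ to the columns in $J_n^{\ast}$ by a standard transportation-matrix construction (northwest-corner or Gale--Ryser style). The entry bound $a'_{i,j}\le M$ is automatic since each entry is at most its column sum $\beta_{n,j}\le M$, and non-negativity is immediate.

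The step I expect to require the most care is the augmentation of $J_n^{\ast}$: each appended column simultaneously shifts $T$, the $D_l$, and the $\epsilon_i:=S_i^{\mathrm{out}}\bmod 2$ in a coupled way, so I must argue that a suitable $o(n)$ augmentation can indeed bring $T$ above $\sum_l|D_l|+\sum_{i\le p}\epsilon_i$ without breaking parity. This should follow because every obstruction is $o(n)$ while each appended column changes the relevant quantities by $O(1)$ (using the uniform bounds $L$ on the entries of $A_n$ and $M$ on $\vec{\beta}_n$), so a greedy augmentation of size $o(n)$ always suffices.
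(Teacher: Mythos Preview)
Your approach is sound in outline and genuinely different from the paper's. The paper proceeds by case analysis: it first treats the one-row case $(p,m)=(1,0)$ trivially, then the two-row case $(p,m)=(0,1)$ by an explicit two-step ``push all excess to column~1, then redistribute'' construction, and finally reduces the general case by splitting $A_n$ row-wise into $p+m$ subproblems, each of type $(1,0)$ or $(0,1)$, with carefully chosen target degrees $\vec{\beta}_n^{(i)}$ for each block (adjusting by~$\pm1$ in a single column to fix parity). Your route---freeze the columns outside $J_n^\ast$, reduce to a feasibility problem for the row-sum vector $(S_i^{\mathrm{in}})$, then fill in via a transportation matrix---is cleaner conceptually and makes the entry bound $a'_{i,j}\le\beta_{n,j}\le M$ fall out immediately. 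The paper's decomposition, by contrast, sidesteps any multi-dimensional feasibility question by reducing everything to one- or two-row problems.

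The part of your argument that is not yet complete is precisely the one you flag. The slogan ``every obstruction is $o(n)$ while each appended column changes the relevant quantities by $O(1)$, so a greedy augmentation of size $o(n)$ always suffices'' does not stand on its own: appending a column can leave the deficit $\sum_l|D_l|+\sum_{i\le p}\epsilon_i-T$ unchanged (the parity bits $\epsilon_i$ can flip the wrong way, and $|D_l|$ can grow by exactly the amount $T$ grows), so you need to exhibit columns that make strict progress. One clean way to close this: show that $\Phi:=T-\sum_l|D_l|$ is non-decreasing under column addition (since $\Delta T=\sum_i a_{i,j_0}$ while $\Delta\sum_l|D_l|\le\sum_l|a_{p+2l-1,j_0}-a_{p+2l,j_0}|\le\sum_{i>p}a_{i,j_0}\le\Delta T$), and that whenever some $D_l\neq0$ there exists $j_0\notin J_n^\ast$ with $\mathrm{sign}(\delta_{l,j_0})=-\mathrm{sign}(D_l)$ (because $\sum_{j\notin J_n^\ast}\delta_{l,j}=-D_l$), which forces $\Delta\Phi\ge1$. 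Iterating gives $\Phi\ge p\ge\sum_i\epsilon_i$ after at most $p-\Phi(J_n)=O(Lm|J_n|)=o(n)$ such additions. This is essentially the same ``borrow from a column where the other row is larger'' move that the paper uses in its $(p,m)=(0,1)$ Step~2, so both arguments ultimately rest on the same combinatorial observation; the paper just avoids having to track all $m$ pairs simultaneously by decomposing first.
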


\begin{proof}
This proof will be divided in three cases, according to the values of $m$ and $p$. We first treat the case $m=0$ and $p=1$, followed by the case $m=1$ and $p=0$, and conclude with the general case.

We denote by $\deg A_{n} = (\alpha^{(n)}_{1}, \dots, \alpha^{(n)}_{n})$ and let $s_{n} = o(n)$ denote the cardinality of the set $I_{n} = \{ j \leq n: \beta_{n,j} \neq \alpha^{(n)}_{j} \}$.

\bigskip

\noindent{\textbf{The case $p=1$ and $m=0$.}}
In this case the matrix $A_{n}$ coincides with $\deg A_{n}$ and the modification is straightforward: simply define $A_{n}' = \vec{\beta}_{n}$. By assumption, $\alpha^{(n)}_j \neq \beta_{n,j}$ if, and only if, $j \in I_{n}$, which implies that only $s_{n} = o(n)$ columns of $A_{n}$ were modified.

\bigskip

\noindent{\textbf{The case $p=0$ and $m=1$.}} Let us assume without loss of generality that $1 \notin I_{n}$. The construction of $A_{n}'$ is made in two steps.

\medskip

\noindent\textit{Step 1.} The first step consists in moving all the excess to the first column. Given the matrix $A_{n}$, we construct the auxiliary matrix $B_{n} = (b^{(n)}_{i,j})$ via
\begin{equation}
b^{(n)}_{i,j} = \begin{cases}
    a^{(n)}_{i,j}, & \quad \text{if } j \notin I_{n} \cup \{1 \}, \\
    \beta_{n,j}, & \quad \text{if } i=1 \text{ and } j \in I_{n}, \\
    0, & \quad \text{if } i=2 \text{ and } j \in I_{n}, \\
    \beta_{n,1} + \sum_{k \in I_{n} \cup \{1\}} a^{(n)}_{1,k}, & \quad \text{if } i=j=1,  \\
    \sum_{k \in I_{n} \cup \{1\}} a^{(n)}_{2,k} + \beta_{n,k}, & \quad \text{if } i=2, j=1.
\end{cases}
\end{equation}

The definition of the entries $b^{(n)}_{1,1}$ and $b^{(n)}_{2,1}$ is done just so that the matrix $B_{n}$ satisfies~\eqref{eq:off_diagonal}. Furthermore, it is clear that the entries of the matrix $B_{n}$ are non-negative integer and that $\deg B_{n} = (\beta_{n,1} + r_{n}, \beta_{n,2}, \dots, \beta_{n,n})$, where $r_{n}$ is a positive even integer value bounded by $\sum_{j \in I_{n} \cup \{1\}} a^{(n)}_{1,k} + a^{(n)}_{2,k} + \beta_{n,j} \leq (2L+M)(s_{n}+1) = o(n)$.

\medskip

\noindent\textit{Step 2.} We now construct matrices $A_{n}'$ that take care of these additional factors of $r_{n}$ in the sum of the first column.

If $b^{(n)}_{1, 1} = b^{(n)}_{2, 1}$, then we obtain $A_{n}'$ from $B_{n}$ by simply removing $\frac{r_{n}}{2}$ from each entry in the first column (notice that $2b^{(n)}_{1,1} = 2b^{(n)}_{2, 1} = b^{(n)}_{1,1}+b^{(n)}_{2, 1} = \beta_{n,1}+r_{n} \geq r_{n}$). Suppose then without loss of generality that $b^{(n)}_{1, 1} > b^{(n)}_{2, 1}$, because the other case is treated analogously.

Notice that, since  the matrix $B_{n}$ satisfies~\eqref{eq:off_diagonal} and $r_{n}>0$, there exists $k > 1$ such that $b^{(n)}_{2,k} \geq 1$. We now reduce the value of entry $b^{(n)}_{1,1}$ by 2, increase the entry $b^{(n)}_{1, k}$ by 1, and reduce the value $b^{(n)}_{k,2}$ by 1. This process is then repeated a maximum of $\frac{r_{n}}{2}$ times in order to obtain a matrix $A_{n}'$ with non-negative integer entries that still satisfies~\eqref{eq:off_diagonal} and such that $\deg A_{n}' = \vec{\beta}_{n}$. Finally, notice that at most $s_{n}+r_{n}+1 = o(n)$ columns where modified to go from $A_{n}$ to $A_{n}'$.

To conclude the proof of this case, it remains to notice that the entries of $A_{n}'$ are bounded by the maximum value of the vector $\vec{\beta}_{n}$.

\bigskip

\noindent{\textbf{The general case.}} In the general case we have $m$ pairs of conjugated colors and $p$ diagonal colors. Recall that we already know how to solve the cases $(m,p)=(1,0)$ and $(m,p)=(0,1)$. We now argue how to solve the problem for any $(m,p)\in \bb N\times \bb N$. Let $A_{n}$ be the matrices associated to the problem of type $(m,p)$. As before, let $I_{n}$ denote the set of columns $j \leq n$ such that $\beta_{n,j} \neq \alpha^{(n)}_{j}$.

We split the matrix $A_{n}$ in several submatrices $A^{(i)}_{n}$ for $i \leq p+m$ such that each submatrix $A^{(i)}_{n}$ corresponds to either a diagonal color or to a pair of conjugated colors: if $i \leq p$, then $A^{(i)}_{n}$ is simply the $i$th row of the matrix $A_{n}$. If on the other hand $p < i \leq p+m$, $A^{(i)}_{n}$ corresponds to the matrix with two rows given by selecting rows $p+2i-1$ and $p+2i$ of $A_{n}$.

The second step is to define the target degrees $\vec{\beta}^{(i)}_{n}$. The case $i=1$ is special and will be treated separately. Consider for now $i \geq 2$ and let us focus in the case $2 < i \leq p$, since $i > p$ is treated analogously. If $\sum_{j \notin I_{n}} a^{(n)}_{i, j}$ is even, we simply set $\beta^{(i)}_{n,j} := a^{(n)}_{i, j} \textbf{1}_{j \notin I_{n}}$. If $\sum_{j \notin I_{n}} a^{(n)}_{i,j}$ is odd, select $j_{*} \notin I_{n}$ such that $a^{(n)}_{i, j_{*}} >0$ and set $\beta^{(i)}_{n,j} := a^{(n)}_{i, j} \textbf{1}_{j \notin I_{n} \cup \{j_{*}\}} + (a^{(n)}_{i, j_{*}}-1) \textbf{1}_{j_{*}}$. With these definitions it follows that $\sum_{j=1}^{n}\beta^{(i)}_{n,j}$ is even for each $2\leq i\leq p+m$. Notice that each of these submatrices have at most $s_{n}+1$ columns such that the target degree does not coincide with the total degree. Finally, we define $\beta^{(1)}_{n,j} := \beta_{n,j} - \sum_{i=2}^{p+m}\beta^{(i)}_{n,j}$. It follows straight from the definition that $\vec{\beta}^{(1)}_{n}$ has only non-negative integer entries (since $\beta^{(i)}_{n,j} = 0$ for $i \geq 2 $ and $j \in I_{n}$ and $\sum_{i=1}^{p+2m}a^{(n)}_{i,j} = \beta_{n,j}$, for $j \notin I_{n}$) such that $\sum_{j=1}^{n} \beta^{(1)}_{n,j}$ is even. Furthermore, $\vec{\beta}^{(1)}_{n}$ disagrees with $ \deg A^{(1)}_{n}$ in at most $p+m-1 = o(n)$ entries. 

We now apply the methods of the previous cases to solve each subproblem and obtain $(A^{(i)}_{n})'$, for $1 \leq i \leq p+m$. This concludes the proof.
\end{proof}

We are now ready to present the proof of Lemma~\ref{le:matrix}.
\begin{proof}[Proof of Lemma~\ref{le:matrix}]
    The proof follows from a direct application of Lemma~\ref{lemma:matrix_degree}. Let $p$ denote the number of diagonal colors in $\mathcal{C}_k$ while $m$ counts pairs of conjugated colors. For each color $c \in \mathcal{C}_k$ we associate an index from the set $\{1, \dots, 2m+p\}$ such that the colors $\{1, \dots, p\}$ are the diagonal colors and the indices $(p+i, p+i+1)$, for $1 \leq i \leq 2m$ odd, correspond to pairs of conjugated colors. With the above conventions we write
    \begin{enumerate}
        \item $a_{ij}=D_n(c)(j)$, $1\leq j\leq n$ where $c$ is the $i$th-diagonal color, and
        \item $a_{p+i,j}=D_n(c)(j)$, $a_{p+i+1,j}=D_n({\bar c})(j)$ where $(c,\bar c)$ is the pair of conjugated  colors corresponding to the indices $(p+i,p+i+1)$, for $1\leq i\leq 2m$ odd.
    \end{enumerate}
   Set $\beta_{n,j} = \ell_{n,j}$, for $j \in [n]$. Lemma~\ref{lemma:matrix_degree} yields a modification $A_{n}'$ of the matrix $A_{n}$ such that 
    \begin{equation}
        \deg A_{n}'= \vec{\beta}_{n}.
    \end{equation}
    This concludes the proof.
\end{proof}

\bibliographystyle{plain} 

\bibliography{mybib}

\end{document}